\RequirePackage{fix-cm}
\documentclass[smallextended,envcountreset]{svjour3} 

\smartqed
\usepackage[T1]{fontenc}
\usepackage[utf8]{inputenc}
\usepackage{microtype}
\usepackage{mathtools,amssymb,amsfonts}
\numberwithin{equation}{section}
\usepackage{booktabs,longtable,array}
\usepackage{enumitem}
\usepackage[hidelinks]{hyperref}
\usepackage[nameinlink,noabbrev]{cleveref}

\crefname{appendix}{appendix}{appendices}
\Crefname{appendix}{Appendix}{Appendices}
\setlist{nosep,leftmargin=2em}
\allowdisplaybreaks[2]

\newcommand{\R}{\mathbb{R}}
\newcommand{\C}{\mathbb{C}}

\newcommand{\CM}{\mathcal{CM}}
\newcommand{\Ecal}{\mathcal{E}}
\newcommand{\dd}{\,\mathrm{d}}
\newcommand{\one}{\mathbf{1}}
\newcommand{\Rea}{\operatorname{Re}}

\newcommand{\esssup}{\operatorname*{ess\,sup}}

\journalname{Fract. Calc. Appl. Anal.}
\title{Complete Monotonicity of the Srivastava--Tomovski Function and Its Laplace--Wright Realization: Necessary and Sufficient Conditions}
\titlerunning{Complete monotonicity of the Srivastava--Tomovski function}

\author{Rey R. Cuenca$^{1,2}$ \and
        Welfredo R. Patungan$^{1}$ \and
        Ruzzel Ragas$^{3}$}
\authorrunning{R. R. Cuenca et al.}

\institute{Rey R. Cuenca$^{1,2,*}$ \at
  $^{1}$School of Statistics, University of the Philippines, 1101 Diliman, Quezon City, Philippines \\
  $^{2}$Department of Mathematics and Statistics, Mindanao State University--Iligan Institute of Technology,
  9200 Iligan City, Philippines \\
  \email{rey.cuenca@g.msuiit.edu.ph} $^*$ corresponding author
  \and
  Welfredo R. Patungan$^{1}$ \at
  School of Statistics, University of the Philippines, 1101 Diliman, Quezon City, Philippines
  \and
  Ruzzel Ragas$^{3}$ \at
  School of Mathematics and Statistics, The University of Sydney, Carslaw F07, 2006 NSW, Australia
}
\date{}

\hypersetup{
  pdftitle={Complete Monotonicity of the Srivastava--Tomovski Function and Its Laplace--Wright Realization: Necessary and Sufficient Conditions},
  pdfauthor={Rey R. Cuenca, Welfredo R. Patungan, Ruzzel Ragas},
  pdfsubject={Necessary and sufficient conditions for complete monotonicity and analytic phases of a four-parameter Mittag--Leffler function},
  pdfkeywords={complete monotonicity, Srivastava--Tomovski function, Prabhakar function, Wright function, Bernstein measure, generalized Wright series}
}

\begin{document}
\maketitle

\begin{abstract}
We determine necessary and sufficient conditions for complete monotonicity of
the Srivastava--Tomovski extension, a generalized Mittag--Leffler kernel
arising in fractional calculus, while separating the defining
generalized-Wright series from its positive-axis realization when the series
is not entire. For positive parameters, let
\(\Delta=1+\alpha-\kappa\). In the entire regime \(\Delta>0\),
\(E_{\alpha,\beta}^{\gamma,\kappa}(-x)\) is completely monotone on
\((0,\infty)\) if and only if
\(\alpha\leq\kappa\) and \(\kappa\beta\geq\alpha\gamma\).
For arbitrary positive parameters, the corresponding positive-axis
realization satisfies the same criterion; when
\(0<\alpha<\kappa\), it is defined by a second-kind Wright kernel and
remains meaningful through the finite-radius and zero-radius phases of the
defining series. We also identify the Bernstein measure in every admissible
case. In the interior region it is, after normalization, the pushforward under
\(t\mapsto t^\kappa\) of a power-biased Wright distribution; at
\(\alpha=\kappa\) it becomes a powered beta law for \(\beta>\gamma\)
and the atom \(\Gamma(\gamma)^{-1}\delta_1\) for \(\beta=\gamma\).
Necessity in the excluded regions follows from a Mellin-transform zero,
uniqueness of finite signed Laplace transforms, and moment-support
asymptotics. The Prabhakar criterion is recovered by setting \(\kappa=1\).
\keywords{complete monotonicity \and Srivastava--Tomovski function \and
Prabhakar function \and Wright function \and Bernstein measure \and
generalized Wright series}
\subclass{33E12 (primary) \and 26A48 \and 44A10 \and 33C60}
\end{abstract}

\section{Introduction}

A smooth function \(f:(0,\infty)\to\R\) is completely monotone if
\((-1)^n f^{(n)}(x)\geq0\) for every \(n\geq0\) and \(x>0\).
Bernstein's theorem identifies this derivative condition with a positive
Laplace representation,
\[
f(x)=\int_{[0,\infty)}e^{-xu}\,M(\dd u),\qquad M\geq0.
\]
Complete monotonicity is therefore both a sign property and a
positive-measure representation \cite{SchillingSongVondracek2012}.

Classical complete-monotonicity results for Mittag--Leffler functions go back
to Pollard \cite{Pollard1948} and Schneider \cite{Schneider1996}. Prabhakar's
three-parameter extension \cite{Prabhakar1971} became a standard kernel in
generalized fractional calculus; see, for example,
\cite{KilbasSaigoSaxena2004,MainardiGarrappa2015,Garrappa2015,GarraGarrappa2018,GiustiEtAl2020,GorenfloEtAl2020}.
Shukla and Prajapati introduced the generalized-Pochhammer step deformation
for restricted step values \cite{ShuklaPrajapati2007}. Srivastava and Tomovski
subsequently gave a substantially improved and generalized formulation with a
general step parameter \(\kappa\) and incorporated it into a
fractional-calculus integral operator \cite{SrivastavaTomovski2009}. Thus the
four-parameter extension studied here has a direct fractional-calculus origin.

The step parameter creates an analytic issue that is easy to conceal if the
series and a Laplace representation are denoted by the same symbol. The
defining generalized-Wright series can be entire, have finite radius of
convergence, or have radius zero. A statement on the whole positive axis must
therefore distinguish the original series from any separately defined
Laplace realization outside their common analytic domain. There is a second,
independent issue. Positivity of a displayed inverse kernel proves
sufficiency for complete monotonicity, but loss of positivity of that kernel
does not by itself exclude another positive Bernstein measure. Necessity
requires a uniqueness or moment argument. The boundary
\(\alpha=\kappa\) also has to be treated separately because the representing
measure changes from a Wright density to a compact beta-type law and then to
an atom.

Exact complete-monotonicity characterizations have a parallel history for
neighboring Mittag--Leffler families. Schneider proved that the
two-parameter function \(E_{\alpha,\beta}(-x)\) is completely monotone if and
only if \(0<\alpha\leq1\) and \(\beta\geq\alpha\)
\cite{Schneider1996}. Boudabsa and Simon obtained an if-and-only-if criterion
for the three-parameter Kilbas--Saigo function by probabilistic and Mellin
methods \cite{BoudabsaSimon2021}. For a different Le Roy-type deformation,
Garrappa, Rogosin, and Mainardi raised the parameter-range problem in FCAA
\cite{GarrappaRogosinMainardi2017}; G{\'o}rska, Horzela, and Garrappa then
derived several complete-monotonicity regions by Laplace methods in FCAA
\cite{GorskaHorzelaGarrappa2019}. Simon subsequently obtained exact criteria
on substantial parameter strata, together with complementary necessary and
sufficient conditions in the remaining regime \cite{Simon2022}. The
Le Roy-type family is not, in general, a specialization of the
Srivastava--Tomovski family. Its relevance here is methodological: these
results show why a sharp characterization needs necessity arguments in
addition to a positive integral representation.

For the Prabhakar function, complete-monotonicity and Laplace-representation
results are available in several forms
\cite{TomovskiPoganySrivastava2014,MainardiGarrappa2015,GorskaEtAl2021}.
In particular, G{\'o}rska, Horzela, Lattanzi, and Pog{\'a}ny prove
sufficiency under \(0<\alpha\leq1\) and
\(\beta\geq\alpha\gamma\)
\cite[Theorem~1, p.~120]{GorskaEtAl2021}. Ferreira and Simon identify the
probabilistic second-kind Wright densities, their Mellin moments, and the
obstructions outside the probabilistic parameter range
\cite[Eqs.~(3)--(4) and the discussion on p.~310]{FerreiraSimon2024}.
The equal-step beta and atomic limits are consistent with the beta-power and
zero-balanced Fox--\(H\) constructions in
\cite{KarpPrilepkina2016,KarpPrilepkina2017,BeghinCristofaroDaSilva2025}. Related positivity results for generalized
Wright and Fox--\(H\) functions appear in \cite{Mehrez2021,Giraldi2026}.

We distinguish the defining series from its positive-axis realization and
obtain the following necessary and sufficient conditions. In the entire regime the
Srivastava--Tomovski series is completely monotone precisely when
\[
\alpha\leq\kappa,\qquad \kappa\beta\geq\alpha\gamma,
\]
and the same inequalities characterize the independently defined
Laplace--Wright realization in all analytic phases. For
\(0<\alpha<\kappa\), the Bernstein measure is a powered pushforward of a
power-biased Wright law. The excluded interior region is ruled out by a zero
of the Mellin transform together with uniqueness of finite signed Laplace
transforms; the equal-step and supercritical exclusions follow from
moment-support asymptotics. The specialization \(\kappa=1\) yields the
Prabhakar criterion. This statement concerns the unweighted functions and
realizations considered here; weighted relaxation kernels such as
\(t^{\beta-1}E_{\alpha,\beta}^{\gamma}(-t^\alpha)\) are different objects and
can have different admissible parameter regions.

	\section{The series and the positive-axis realization}
	
	\begin{definition}[Srivastava--Tomovski series]\label{def:series}
		For \(\alpha,\beta,\gamma,\kappa>0\), set
		\begin{equation}\label{eq:series}
			E_{\alpha,\beta}^{\gamma,\kappa}(z)
			:=\sum_{n=0}^{\infty}
			\frac{\Gamma(\gamma+\kappa n)}
			{\Gamma(\gamma)\,n!\,\Gamma(\beta+\alpha n)}z^n
		\end{equation}
		first as a formal power series.  When its radius of convergence is positive,
		it defines an analytic function on the open disk of convergence.  At a
		boundary point where the series converges, its sum is understood only as a
		pointwise boundary value unless an analytic continuation is separately
		established.  Write
		\begin{equation}\label{eq:coordinates}
			\Delta:=1+\alpha-\kappa,
			\qquad
			a:=\frac{\alpha}{\kappa},
			\qquad
			\mu:=\beta-\frac{\alpha\gamma}{\kappa}.
		\end{equation}
	\end{definition}
	
	Thus \eqref{eq:series} is an entire function when \(\Delta>0\), an analytic
	germ on \(\lvert z\rvert<R\) when \(\Delta=0\), and only a formal series when
	\(\Delta<0\).  In the last case it does not define an analytic germ at zero.
	
	This is the generalized-Pochhammer normalization of
	\cite[Eq.~(1.13), p.~200]{SrivastavaTomovski2009}; in particular,
	\begin{equation}\label{eq:value-zero}
		E_{\alpha,\beta}^{\gamma,\kappa}(0)=\frac{1}{\Gamma(\beta)}.
	\end{equation}
	The generalized-Wright convergence criterion gives the following trichotomy.
	
	\begin{proposition}[Convergence trichotomy]\label{prop:convergence}
		The series \eqref{eq:series} is entire if \(\Delta>0\), has radius
		\begin{equation}\label{eq:radius}
			R=\frac{\alpha^\alpha}{\kappa^\kappa}
		\end{equation}
		if \(\Delta=0\), and has radius zero if \(\Delta<0\).
	\end{proposition}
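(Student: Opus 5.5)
The plan is to apply the ratio test to the coefficients
\[
c_n:=\frac{\Gamma(\gamma+\kappa n)}{\Gamma(\gamma)\,n!\,\Gamma(\beta+\alpha n)}>0 .
\]
We compute the limit of \(c_{n+1}/c_n\) using Stirling's formula. The standard fact we use is that for fixed \(s>0\) and fixed \(b\),
\[
\frac{\Gamma(b+s(n+1))}{\Gamma(b+sn)}=(sn)^{s}\bigl(1+O(n^{-1})\bigr),\qquad n\to\infty .
\]
This follows from \(\log\Gamma(x+s)-\log\Gamma(x)=s\log x+O(1/x)\). Applying it with \(s=\kappa\), \(s=1\) and \(s=\alpha\) gives
\[
\frac{c_{n+1}}{c_n}
=\frac{(\kappa n)^{\kappa}}{n\,(\alpha n)^{\alpha}}\bigl(1+O(n^{-1})\bigr)
=\frac{\kappa^\kappa}{\alpha^\alpha}\,n^{\kappa-1-\alpha}\bigl(1+O(n^{-1})\bigr)
=\frac{\kappa^\kappa}{\alpha^\alpha}\,n^{-\Delta}\bigl(1+O(n^{-1})\bigr).
\]

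Next we split into the three cases according to the sign of \(\Delta\).

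\begin{itemize}
\item If \(\Delta>0\), the ratio tends to \(0\). The ratio test then gives convergence for every \(z\), so the series is entire.
\item If \(\Delta=0\), the ratio tends to \(\kappa^\kappa/\alpha^\alpha\). The radius is therefore \(R=\alpha^\alpha/\kappa^\kappa\), since \(\lim\lvert c_{n+1}/c_n\rvert=L\) implies \(R=1/L\).
\item If \(\Delta<0\), the ratio tends to \(+\infty\). Then \(c_n\lvert z\rvert^n\to\infty\) for every \(z\neq0\), so the terms do not tend to zero and the radius is zero.
\end{itemize}

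Because every \(c_n\) is positive, the ratio test is conclusive in each case. No Cauchy--Hadamard \(\limsup\) argument is needed, although it would give the same answer.

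The only delicate step is justifying the Gamma-ratio asymptotic uniformly enough. We handle it by writing \(\log\Gamma(x+s)-\log\Gamma(x)=\int_x^{x+s}\psi(t)\dd t\) and using \(\psi(t)=\log t+O(1/t)\). This gives \(s\log x+O(1/x)\) for fixed \(s\) as \(x\to\infty\).

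We then substitute \(x=\gamma+\kappa n\), \(x=n+1\) and \(x=\beta+\alpha n\) respectively. Finally, we absorb \(\log(\kappa n+\gamma)=\log(\kappa n)+O(1/n)\), which produces the displayed formula.

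Alternatively, we may simply invoke the known convergence criterion for the generalized Wright function \({}_1\Psi_1\). Its series is entire when \(1+\alpha-\kappa>0\) and has radius \(\alpha^\alpha\kappa^{-\kappa}\) in the balanced case. We would cite this criterion and present the ratio computation above as a self-contained verification.
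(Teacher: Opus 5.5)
Your proposal is correct and matches the paper's argument. In the appendix on convergence phases, the paper likewise obtains $c_{n+1}/c_n=\frac{\kappa^\kappa}{\alpha^\alpha}\,n^{-\Delta}\bigl(1+O(n^{-1})\bigr)$ from gamma-ratio asymptotics, reads off the three cases by the ratio test, and also cites the classical generalized-Wright criterion; its explicit $1/n$ coefficient is used only later, for the boundary analysis at $\Delta=0$.
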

	
	The classical generalized-Wright criterion may be found in
	\cite[Theorem~1, pp.~201--202]{SrivastavaTomovski2009} and
	\cite[Section~2.3]{ParisKaminski2001}; a direct coefficient proof is
	included in \Cref{app:phases}.
	
	For \(\lambda>-1\) and \(\nu,z\in\C\), the Wright function is
	\begin{equation}\label{eq:wright}
		W_{\lambda,\nu}(z)
		:=\sum_{n=0}^{\infty}\frac{z^n}{n!\Gamma(\lambda n+\nu)}.
	\end{equation}
	When \(0<a<1\), the function \(W_{-a,\nu}\) is of the second kind
	\cite{Mainardi2010,ParisConsiglioMainardi2021}.
	
	\begin{definition}[Positive-axis realization]\label{def:realization}
		For \(0<\alpha<\kappa\), define
		\begin{align}
			K_{\alpha,\beta}^{\gamma,\kappa}(u)
			&:=\frac{u^{\gamma/\kappa-1}}{\kappa\Gamma(\gamma)}
			W_{-\alpha/\kappa,\,\mu}(-u^{1/\kappa}),\label{eq:kernel}\\
			\Ecal_{\alpha,\beta}^{\gamma,\kappa}(x)
			&:=\int_0^\infty e^{-xu}
			K_{\alpha,\beta}^{\gamma,\kappa}(u)\dd u,
			\qquad x\geq0.\label{eq:realization}
		\end{align}
		For \(\alpha\geq\kappa\), define instead
		\begin{equation}\label{eq:realization-series}
			\Ecal_{\alpha,\beta}^{\gamma,\kappa}(x)
			:=E_{\alpha,\beta}^{\gamma,\kappa}(-x).
		\end{equation}
	\end{definition}
	
	The series in \eqref{eq:realization-series} is entire because
	\(\alpha\geq\kappa\) implies \(\Delta\geq1\).  For
	\(0<\alpha<\kappa\), the kernel in \eqref{eq:kernel} is integrable in
	absolute value for every real \(\mu\); it need not be nonnegative.
	
	\begin{proposition}[Agreement and analytic phases]\label{prop:phases}
		Suppose \(0<\alpha<\kappa\).
		\begin{enumerate}[label=\textup{(\roman*)}]
			\item If \(\Delta>0\), the integral \eqref{eq:realization} extends to an
			entire function of \(x\) and equals
			\(E_{\alpha,\beta}^{\gamma,\kappa}(-x)\) for every \(x\in\C\).
			\item If \(\Delta=0\), the integral is holomorphic on
			\(\Rea x>-R\), agrees with
			\(E_{\alpha,\beta}^{\gamma,\kappa}(-x)\) for \(\lvert x\rvert<R\),
			and therefore analytically continues that germ to the half-plane
			\(\Rea x>-R\).
			\item If \(\Delta<0\), the integral is holomorphic on \(\Rea x>0\),
			has finite right derivatives of every order at \(x=0\), and is not
			analytic there. It is therefore a separately defined positive-axis
			realization rather than an analytic continuation of the zero-radius
			formal series.
		\end{enumerate}
	\end{proposition}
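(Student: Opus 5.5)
The plan is to reduce everything to two facts about the kernel \eqref{eq:kernel}: its moment sequence, and its decay at infinity. For the moments, I would use the Mellin transform of the second-kind Wright function,
\[
\int_0^\infty t^{s-1}W_{-a,\mu}(-t)\dd t=\frac{\Gamma(s)}{\Gamma(\mu+as)},\qquad \Rea s>0,
\]
valid for \(0<a<1\) and every real \(\mu\) (see \cite{Mainardi2010,ParisConsiglioMainardi2021}). Substituting \(u=t^\kappa\) gives \(u^{\gamma/\kappa-1}\dd u=\kappa t^{\gamma-1}\dd t\). Taking \(s=\gamma+\kappa n\) and noting \(\mu+a(\gamma+\kappa n)=\beta+\alpha n\), I expect
\[
m_n:=\int_0^\infty u^nK_{\alpha,\beta}^{\gamma,\kappa}(u)\dd u=\frac{\Gamma(\gamma+\kappa n)}{\Gamma(\gamma)\Gamma(\beta+\alpha n)},\qquad n\geq0.
\]
So the moments of \(K\) are exactly the coefficients of \eqref{eq:series}, up to the factor \(n!\). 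Integrability near \(u=0\) is automatic because \(\gamma>0\) and \(W_{-a,\mu}\) is entire.

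For the decay, I would invoke the classical saddle-point asymptotics of \(W_{-a,\mu}(-t)\) as \(t\to+\infty\). They give a bound \(\lvert W_{-a,\mu}(-t)\rvert\leq C(1+t)^{N}\exp\bigl(-c\,t^{1/(1-a)}\bigr)\) with \(c=(1-a)a^{a/(1-a)}\). Transferred to \(u\), this reads \(\lvert K(u)\rvert\lesssim u^{N'}\exp\bigl(-c\,u^{1/(\kappa-\alpha)}\bigr)\), since \(\kappa(1-a)=\kappa-\alpha\). The trichotomy then lines up with \(\Delta\):
\begin{itemize}
\item \(\Delta>0\) means \(\kappa-\alpha<1\), so the decay is superexponential.
\item \(\Delta=0\) means the decay is exactly exponential. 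Here I would check that \(c=\frac1\kappa(\alpha/\kappa)^{\alpha}=\alpha^\alpha/\kappa^\kappa=R\), using \(a/(1-a)=\alpha\) and \(1-a=1/\kappa\).
\item \(\Delta<0\) means the decay is subexponential but faster than any power.
\end{itemize}

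With these two facts the three parts follow by dominated convergence and Fubini. In (i), \(\int_0^\infty e^{\lvert x\rvert u}\lvert K(u)\rvert\dd u<\infty\) for all \(x\in\C\). Expanding \(e^{-xu}\) and interchanging sum and integral gives \(\sum_n(-x)^nm_n/n!=E_{\alpha,\beta}^{\gamma,\kappa}(-x)\), and holomorphy follows from Morera or differentiation under the integral sign. In (ii), the same bound makes the integral holomorphic on \(\Rea x>-R\). For \(\lvert x\rvert<R\) the integral \(\int e^{\lvert x\rvert u}\lvert K\rvert\dd u\) is finite, so the series interchange is justified on the disk, and identity of analytic continuation gives the last clause. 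In (iii), the integral is holomorphic on \(\Rea x>0\). Since every moment of \(\lvert K\rvert\) is finite, dominated convergence shows that each derivative extends continuously to \(\Rea x\geq0\), with right derivatives at \(0\) equal to \((-1)^nm_n\).

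Non-analyticity at \(0\) then goes by contradiction. If the integral were analytic in a neighbourhood of \(0\), its Taylor coefficients would be these one-sided derivatives divided by \(n!\), that is, the coefficients of \eqref{eq:series} at \(-x\). That series would then have positive radius, contradicting \Cref{prop:convergence}. The main obstacle I expect is the uniform decay estimate for \(W_{-a,\mu}(-t)\), valid for all real \(\mu\) including those where it changes sign. I would first try to quote Wright's exponentially small asymptotic expansion, and only if necessary derive the bound from a Hankel-contour integral representation by steepest descent. In the critical case, a polynomial prefactor at exactly rate \(R\) does no harm because the claim concerns only the open half-plane.
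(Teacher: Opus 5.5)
Your proposal is correct and follows the paper's proof in \Cref{app:phases} essentially step for step. The shared ingredients are the moments of \(K\) from the Wright Mellin formula and the tail bound \(\lvert K(u)\rvert\lesssim u^{N'}\exp\{-b'u^{1/(\kappa-\alpha)}\}\) with the critical identity \(B=R\); both proofs then use dominated termwise expansion of \(e^{-xu}\) for (i)--(ii) and, for (iii), right derivatives \((-1)^nm_n\) whose Taylor series would contradict the zero radius in \Cref{prop:convergence}. The one input you cite from the literature rather than prove, the ordinary Mellin integral when \(\mu<0\), is exactly what the paper establishes separately in \Cref{lem:wright-mellin} and \Cref{app:mellin}.
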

	
	The proof, including the exact critical decay rate, is given in
	\Cref{app:phases}.  This proposition is the reason for retaining distinct
	symbols \(E\) and \(\Ecal\).
	
	\section{Complete-monotonicity theorems}
	
	Let \(\CM(0,\infty)\) denote the cone of completely monotone functions on
	\((0,\infty)\).  This is a property on the open half-line; continuity at zero
	is not part of its definition.
	
	\begin{theorem}[Entire-series characterization]\label{thm:series}
		Let \(\alpha,\beta,\gamma,\kappa>0\) and assume
		\(\Delta=1+\alpha-\kappa>0\).  Then
		\begin{equation}\label{eq:series-iff}
			E_{\alpha,\beta}^{\gamma,\kappa}(-\,\cdot)
			\in\CM(0,\infty)
			\quad\Longleftrightarrow\quad
			\alpha\leq\kappa
			\quad\text{and}\quad
			\kappa\beta\geq\alpha\gamma.
		\end{equation}
	\end{theorem}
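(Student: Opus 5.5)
The plan splits into sufficiency, which comes from a positive Laplace representation, and necessity in three excluded regions: $\alpha>\kappa$, $\alpha=\kappa$ with $\kappa\beta<\alpha\gamma$, and $\alpha<\kappa$ with $\mu<0$. Throughout we write $a=\alpha/\kappa$ and $\mu=\beta-a\gamma$ as in \eqref{eq:coordinates}.

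\emph{Sufficiency.} First take $0<\alpha<\kappa$ with $\Delta>0$. By \Cref{prop:phases}(i), $E_{\alpha,\beta}^{\gamma,\kappa}(-x)=\int_0^\infty e^{-xu}K(u)\dd u$, so it suffices to show $K\ge0$ when $\mu\ge0$. We will write
\[
W_{-a,\mu}(-s)=\int (\text{positive kernel})\cdot W_{-a,0}(-\cdot)\,(\cdots),
\]
that is, we use the fractional-integral identity
\[
W_{-a,\mu}(-s)=\frac{1}{\Gamma(\mu)}\int_0^1(1-t)^{\mu-1}t^{-\,\cdots}W_{-a,0}(\cdots)\dd t
\]
in Mellin form. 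Concretely, the Mellin transform of $W_{-a,\mu}(-s)$ is $\Gamma(s)/\Gamma(\mu+as)$. This factors as $\Gamma(s)/\Gamma(1+as)$, the Mellin transform of the positive M-Wright density, times $\Gamma(1+as)/\Gamma(\mu+as)$. For $\mu\ge1$ the second factor is the Mellin transform of a beta-type positive measure; for $0\le\mu<1$ one uses $\Gamma(s)/\Gamma(\mu+as)$ directly as a moment function of a positive law (the second-kind Wright density of Ferreira--Simon) and checks positivity via the multiplicative convolution of stable-type variables. Positivity of $K$ then follows, since $u^{\gamma/\kappa-1}>0$.

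For $\alpha=\kappa$ (so $a=1$), the coefficient ratio becomes $\Gamma(\gamma+\alpha n)/\Gamma(\beta+\alpha n)$. For $\beta>\gamma$ we write this as $\Gamma(\beta-\gamma)^{-1}\int_0^1 t^{\gamma+\alpha n-1}(1-t)^{\beta-\gamma-1}\dd t$. Summing the series in $n$ gives
\[
E(-x)=\frac{1}{\Gamma(\gamma)\Gamma(\beta-\gamma)}\int_0^1 e^{-xt^\alpha}t^{\gamma-1}(1-t)^{\beta-\gamma-1}\dd t,
\]
which is a Laplace transform of the pushforward under $t\mapsto t^\alpha$, a powered beta law. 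For $\beta=\gamma$ the series is $e^{-x}/\Gamma(\gamma)$. Both are completely monotone.

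\emph{Necessity for $\alpha\ge\kappa$, $(\alpha,\beta)\ne$ admissible.} Suppose $E(-x)=\int e^{-xu}M(\dd u)$ with $M\ge0$. Since $E$ is entire in $x$ and the measure is positive, the Laplace transform converges for all real $x$, so $M$ has all exponential moments. Comparing Taylor coefficients gives
\[
\int u^n M(\dd u)=n!\,c_n=\frac{\Gamma(\gamma+\kappa n)}{\Gamma(\gamma)\Gamma(\beta+\alpha n)}.
\]
Taking $n$-th roots, Stirling gives $m_n^{1/n}\sim C n^{\kappa-\alpha}$, where $m_n$ denotes the $n$-th moment. If $\alpha>\kappa$ this tends to $0$, forcing $M=c\delta_0$, which contradicts the non-constancy of $E$. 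If $\alpha=\kappa$, then $m_n^{1/n}\to \kappa^\kappa/\alpha^\alpha=1$, so $\operatorname{supp}M\subset[0,1]$. Moreover $m_n\sim C n^{\gamma-\beta}$ as $n\to\infty$, while $m_n$ is nonincreasing in $n$ and bounded below by $M(\{1\})$. When $\beta<\gamma$, $m_n\to\infty$, which contradicts boundedness of the moments of a positive measure on $[0,1]$.

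\emph{Necessity for $\alpha<\kappa$, $\mu<0$.} This is the main obstacle. Uniqueness of Laplace transforms of finite signed measures reduces the question to showing $K$ takes negative values. Indeed, $K$ is absolutely integrable, so any positive Bernstein measure would have to equal $K(u)\dd u$. To show negativity, we compute the Mellin transform
\[
\int_0^\infty u^{s-1}K(u)\dd u\;\propto\;\frac{\Gamma(\gamma+\kappa(s-1))}{\Gamma(\mu+a\kappa(s-1)+a\gamma)},
\]
whose denominator is $\Gamma(\beta+\alpha(s-1))$. We then locate a real $s$ in the strip of absolute convergence where this vanishes, i.e. $\beta+\alpha(s-1)=0$, so $s=1-\beta/\alpha$. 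The condition $\mu<0$, equivalently $\beta<a\gamma$, is exactly the statement that this $s$ satisfies $\gamma+\kappa(s-1)>0$, i.e. that it lies in the convergence strip. At that point $\int u^{s-1}K(u)\dd u=0$ with $u^{s-1}>0$, so $K\ge0$ would force $K\equiv0$, which is absurd. The delicate part is justifying absolute convergence of the Mellin integral at this $s$. Near $u=0$ this uses $K(u)\sim u^{\gamma/\kappa-1}/\Gamma(\mu)$, and the case $\mu$ a nonpositive integer needs the next term. Near $\infty$ it uses the exponential decay of $W_{-a,\mu}(-\cdot)$.
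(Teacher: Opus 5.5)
Your proposal is correct and follows the paper's route. Sufficiency comes from the positive Wright kernel via \Cref{prop:phases}(i), ultimately resting on Ferreira--Simon positivity as in the paper, together with the powered-beta and atomic representations at \(\alpha=\kappa\); necessity comes from moment-root asymptotics when \(\alpha\geq\kappa\), and from the Mellin zero at \(s=1-\beta/\alpha\) combined with finite signed Laplace uniqueness when \(\mu<0\). Two steps you assert without argument need a line each: that the Bernstein measure \(N\) is finite, and that its moments are \(m_n=n!\,c_n\) (and then, by Tonelli, that its exponential moments are finite); both follow from monotone convergence in \((-1)^n\frac{\dd^n}{\dd x^n}E_{\alpha,\beta}^{\gamma,\kappa}(-x)=\int u^ne^{-xu}N(\dd u)\) as \(x\downarrow0\), which is exactly how the paper obtains them.
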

	
	\begin{theorem}[Positive-axis realization characterization]\label{thm:realization}
		For arbitrary \(\alpha,\beta,\gamma,\kappa>0\), the realization in
		\Cref{def:realization} satisfies
		\begin{equation}\label{eq:realization-iff}
			\Ecal_{\alpha,\beta}^{\gamma,\kappa}
			\in\CM(0,\infty)
			\quad\Longleftrightarrow\quad
			\alpha\leq\kappa
			\quad\text{and}\quad
			\kappa\beta\geq\alpha\gamma.
		\end{equation}
		Every admissible case is strictly completely monotone.  Its Bernstein
		measure is as follows.
		
		\begin{enumerate}[label=\textup{(\roman*)}]
			\item If \(0<\alpha<\kappa\) and \(\mu\geq0\), the measure is
			absolutely continuous with the strictly positive density
			\(K_{\alpha,\beta}^{\gamma,\kappa}\) in \eqref{eq:kernel}.
			\item If \(\alpha=\kappa\) and \(\beta>\gamma\), the density is
			\begin{equation}\label{eq:beta-density}
				K_{=}(u)=
				\frac{u^{\gamma/\kappa-1}
					(1-u^{1/\kappa})^{\beta-\gamma-1}}
				{\kappa\Gamma(\gamma)\Gamma(\beta-\gamma)}
				\one_{(0,1)}(u).
			\end{equation}
			\item If \(\alpha=\kappa\) and \(\beta=\gamma\), the measure is
			\begin{equation}\label{eq:atom}
				\frac{1}{\Gamma(\gamma)}\,\delta_1,
				\qquad
				\Ecal_{\kappa,\gamma}^{\gamma,\kappa}(x)
				=\frac{e^{-x}}{\Gamma(\gamma)}.
			\end{equation}
		\end{enumerate}
		In every case the raw measure has total mass \(1/\Gamma(\beta)\).
		Consequently, every admissible realization extends continuously to zero and
		has value \(1/\Gamma(\beta)\) there.
	\end{theorem}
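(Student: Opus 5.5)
We split the proof of \Cref{thm:realization} into three regimes: \(0<\alpha<\kappa\), \(\alpha=\kappa\), and \(\alpha>\kappa\). In each we treat sufficiency and necessity separately.

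\textbf{The interior regime \(0<\alpha<\kappa\).} Write \(a=\alpha/\kappa\in(0,1)\). Sufficiency comes from the classical positivity of second-kind Wright functions. For \(\mu\geq0\), the function \(W_{-a,\mu}(-y)\) is strictly positive on \(y>0\). Concretely, \(W_{-a,\mu}(-y)\) is, up to normalization, the density of a power-biased Wright law \cite{FerreiraSimon2024}. At \(\mu=0\) it reduces to \(a\,y\,W_{-a,1-a}(-y)\), which is positive as well. Hence the kernel \eqref{eq:kernel} is a strictly positive integrable density, and \(\Ecal\) is its Laplace transform. This gives strict complete monotonicity and item (i). The substitution \(u=t^\kappa\) displays the measure as the \(t\mapsto t^\kappa\) pushforward. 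The total mass \(\int_0^\infty K\,\dd u\) follows from the Mellin moment formula
\[
\int_0^\infty t^{s-1}W_{-a,\mu}(-t)\dd t=\frac{\Gamma(s)}{\Gamma(\mu+as)},
\]
evaluated at \(s=\gamma\), since \(\mu+a\gamma=\beta\). This gives mass \(1/\Gamma(\beta)\), and monotone convergence then gives continuity at \(0\).

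For necessity when \(\mu<0\), suppose \(\Ecal=\int e^{-xu}M(\dd u)\) with \(M\geq0\). The kernel \(K\) is absolutely integrable, so \(\Ecal\) is also the Laplace transform of the finite signed measure \(K\,\dd u\). By uniqueness of Laplace transforms of finite signed measures, \(M=K\,\dd u\), and so \(K\geq0\). We contradict this using the same Mellin formula. The Mellin transform of \(u\mapsto K(u^\kappa)\)-type moments is \(\Gamma(s)/\Gamma(\mu+as)\), up to a shift \(s\mapsto\gamma+\kappa s'\). It vanishes at \(s=-\mu/a>0\), where \(\Gamma(\mu+as)\) has a pole at \(0\). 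A nonnegative nonzero function cannot have a vanishing Mellin moment at a real point inside its strip of absolute convergence. This is the \emph{main obstacle}. We must check that \(s=-\mu/a\) lies in that strip, and the strip extends to \((0,\infty)\) because of the stretched-exponential decay of \(W_{-a,\mu}(-y)\). If \(-\mu/a\) should fall outside the accessible range after the shift by \(\gamma\), we would instead use moments \(s\) close to \(-\mu/a+k/a\), the other zeros \(\mu+as=-k\). We would also verify that the Mellin identity extends to all \(s>0\) by analytic continuation.

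\textbf{The case \(\alpha=\kappa\).} Here \(\Delta=1\), so the series is entire. For \(\beta>\gamma\), the beta integral
\[
\frac{\Gamma(\gamma+\kappa n)}{\Gamma(\beta+\kappa n)}=\frac{1}{\Gamma(\beta-\gamma)}\int_0^1 t^{\gamma+\kappa n-1}(1-t)^{\beta-\gamma-1}\dd t
\]
can be summed termwise against \((-x)^n/n!\), giving \(\int_0^1 e^{-xt^\kappa}(\dots)\dd t\). Substituting \(u=t^\kappa\) yields \eqref{eq:beta-density}. For \(\beta=\gamma\), the coefficients collapse to \(1/(\Gamma(\gamma)n!)\), giving \(e^{-x}/\Gamma(\gamma)\). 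In both cases the mass is the value at \(0\), namely \(1/\Gamma(\beta)\). For necessity when \(\beta<\gamma\), we use moments. A completely monotone entire \(f(-\,\cdot)\) with Bernstein measure \(M\) has \(\int u^n\,\dd M=n!\,c_n\), where \(c_n\) are the Taylor coefficients, and \(M\) must then have compact support. Here \(n!\,c_n=\Gamma(\gamma+\kappa n)/(\Gamma(\gamma)\Gamma(\beta+\kappa n))\sim C n^{\kappa(\gamma-\beta)}\to\infty\) polynomially. Since \((n!c_n)^{1/n}\to1\), the support lies in \([0,1]\), which forces \(\int u^n\dd M\) to be bounded, a contradiction.

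\textbf{The supercritical case \(\alpha>\kappa\).} The same moment test applies. Now \(n!c_n=\Gamma(\gamma+\kappa n)/(\Gamma(\gamma)\Gamma(\beta+\alpha n))\to0\) superexponentially, so \(\bigl(\int u^n\dd M\bigr)^{1/n}\to0\). This forces \(M\) to be supported at \(\{0\}\), making \(\Ecal\) constant, which contradicts \(c_1\neq0\).

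Finally, \Cref{thm:series} follows from \Cref{thm:realization} combined with \Cref{prop:phases}(i), since \(\Ecal\) coincides with \(E(-\,\cdot)\) when \(\Delta>0\).
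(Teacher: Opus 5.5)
Your proposal follows the paper's proof essentially step for step. For the interior you use Ferreira--Simon positivity, the \(\mu=0\) cancellation, and the Mellin formula at \(z=\gamma\) for the mass. At \(\alpha=\kappa\) you use the beta integral and the coefficient collapse. For \(\mu<0\) you combine the Mellin zero at \(z_0=-\mu/a\) with signed Laplace uniqueness. For \(\beta<\gamma\) and \(\alpha>\kappa\) you use moment-root support asymptotics. A few small repairs are needed.

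First, finite signed uniqueness applies only once you know \(M\) is finite. Get this as the paper does: \(M([0,\infty))=\lim_{x\downarrow0}\Ecal(x)=\int_0^\infty K=1/\Gamma(\beta)\), by monotone convergence for \(M\) and dominated convergence for \(K\). Alternatively, tilt both measures by \(e^{-x_0u}\).

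Second, your \emph{main obstacle} is not one. Since \(K\geq0\) is equivalent to \(W_{-a,\mu}(-t)\geq0\), work directly with the Wright Mellin transform. The point \(z_0=-\mu/a>0\) always lies in the half-plane \(\Rea z>0\) of absolute convergence from \Cref{lem:wright-mellin}, so no shift by \(\gamma\) enters. Your fallback also has a sign error: the other zeros of \(1/\Gamma(\mu+az)\) sit at \(-\mu/a-k/a\), not \(-\mu/a+k/a\).

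Third, it is false that an entire completely monotone function must have a compactly supported Bernstein measure. The interior Wright kernel with \(\Delta>0\) is a counterexample. Compactness here comes only from \(m_n^{1/n}\to1\), which you do use, so the false aside is not load-bearing. Also, the asymptotic is \(m_n\sim(\kappa n)^{\gamma-\beta}/\Gamma(\gamma)\), not \(Cn^{\kappa(\gamma-\beta)}\), though the conclusion \(m_n\to\infty\) is unaffected.
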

	
	The measures in this theorem are obtained from established probability laws by
	deterministic transformations. In the interior, the normalized measure is the
	pushforward of a power bias of the Ferreira--Simon Wright law; see
	\Cref{rem:probability-law}. The equal-step cases agree with the beta-power and
	zero-balanced atomic laws in
	\cite[Remark~4, p.~150]{KarpPrilepkina2016},
	\cite[Corollary~2.1, p.~356]{KarpPrilepkina2017}, and
	\cite[Lemma~5 and Theorem~8(C0,C2), pp.~10--14]{BeghinCristofaroDaSilva2025};
	see also \Cref{prop:boundary}.
	
	The inequalities in \eqref{eq:realization-iff} are equivalently
	\(0<\alpha<\kappa,\mu\geq0\) or
	\(\alpha=\kappa,\beta\geq\gamma\).  The complete partition is displayed in
	\Cref{tab:strata}.
	
	\begin{longtable}{@{}>{\raggedright\arraybackslash}p{0.26\textwidth}
			>{\raggedright\arraybackslash}p{0.12\textwidth}
			>{\raggedright\arraybackslash}p{0.54\textwidth}@{}}
		\caption{Complete-monotonicity status on the seven positive-parameter strata.}
		\label{tab:strata}\\
		\toprule
		Stratum & Status & Representing measure or obstruction\\
		\midrule
		\endfirsthead
		\toprule
		Stratum & Status & Representing measure or obstruction\\
		\midrule
		\endhead
		\bottomrule
		\endfoot
		\(0<\alpha<\kappa,\ \mu>0\)
		& Strictly CM
		& Strictly positive Wright density \eqref{eq:kernel}\\
		\(0<\alpha<\kappa,\ \mu=0\)
		& Strictly CM
		& Cancelled Wright density; its first Wright coefficient vanishes but the density remains strictly positive\\
		\(0<\alpha<\kappa,\ \mu<0\)
		& Not CM
		& Zero positive Mellin ordinate forces a sign-changing inverse; finite signed Laplace uniqueness excludes another positive measure\\
		\(\alpha=\kappa,\ \beta>\gamma\)
		& Strictly CM
		& Powered-beta density \eqref{eq:beta-density}\\
		\(\alpha=\kappa,\ \beta=\gamma\)
		& Strictly CM
		& Atom \(\Gamma(\gamma)^{-1}\delta_1\)\\
		\(\alpha=\kappa,\ \beta<\gamma\)
		& Not CM
		& Moment roots force support in \([0,1]\), whereas the moments diverge\\
		\(\alpha>\kappa\)
		& Not CM
		& Moment roots tend to zero, forcing support at zero and contradicting the positive first moment\\
	\end{longtable}
	
	\section{The representing measures}
	
	The next results give the representing measures and the transforms used in the necessity arguments.
	
	\begin{lemma}[Second-kind Wright Mellin transform]\label{lem:wright-mellin}
		Let \(0<a<1\) and \(\nu\in\R\). If
		\(\nu\notin\mathbb Z_{\leq0}\), then for \(\Rea z>0\),
		\begin{equation}\label{eq:wright-mellin}
			\int_0^\infty t^{z-1}W_{-a,\nu}(-t)\dd t
			=\frac{\Gamma(z)}{\Gamma(\nu+az)},
		\end{equation}
		and the integral is absolutely convergent. If
		\(\nu\in\mathbb Z_{\leq0}\), the same identity holds for
		\(\Rea z>-1\), with the quotient at \(z=0\) interpreted by continuity.
		These lower bounds are sharp for absolute convergence at the origin.
		Moreover, for fixed \(a,\nu\) there exist \(C,N,b>0\) such that
		\begin{equation}\label{eq:wright-tail}
			\lvert W_{-a,\nu}(-t)\rvert
			\leq Ct^N\exp\{-bt^{1/(1-a)}\},
			\qquad t\geq1.
		\end{equation}
	\end{lemma}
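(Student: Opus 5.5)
The plan is to handle the tail bound \eqref{eq:wright-tail} first, since it controls convergence at infinity, and then to compute the Mellin transform by a Hankel-contour representation.

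\textbf{Tail bound.} For $0<a<1$, the second-kind Wright function has the integral representation
\[
W_{-a,\nu}(z)=\frac{1}{2\pi i}\int_{Ha}e^{\sigma+z\sigma^{a}}\sigma^{-\nu}\dd\sigma ,
\]
obtained by inserting Hankel's formula for $1/\Gamma(\nu-an)$ termwise. The series converges uniformly on compacts because it is entire for $-a>-1$. A saddle-point estimate at $z=-t$ then gives the bound. The phase $\sigma-t\sigma^{a}$ has its saddle at $\sigma_0=(at)^{1/(1-a)}$. The standard asymptotics \cite{Mainardi2010,ParisConsiglioMainardi2021} show
\[
W_{-a,\nu}(-t)\sim Ct^{N'}\exp\{-(1-a)a^{a/(1-a)}t^{1/(1-a)}\}.
\]
Any $0<b<(1-a)a^{a/(1-a)}$, together with a suitable power $t^N$, therefore yields \eqref{eq:wright-tail}. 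An honest upper bound needs only a deformed contour through $\sigma_0$ on which $\Rea(\sigma-t\sigma^a)\le-\,c\,t^{1/(1-a)}$, plus a polynomial length factor.

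\textbf{Mellin transform.} The integrand is now absolutely integrable at infinity for every $z$. Near the origin, $W_{-a,\nu}(-t)=1/\Gamma(\nu)+O(t)$, so the integral converges absolutely at $0$ exactly when $\Rea z>0$, provided $1/\Gamma(\nu)\neq0$. If $\nu\in\mathbb Z_{\le0}$, the constant term vanishes. The next coefficient is $-1/\Gamma(\nu-a)$, which is nonzero since $\nu-a\notin\mathbb Z$, so the range becomes $\Rea z>-1$. This also proves sharpness, because $|t^{z-1}\cdot\text{leading term}|$ fails to be integrable at the boundary.

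To evaluate the transform, fix $z$ with $0<\Rea z<1$. Insert the Hankel representation, choosing the contour so that $\Rea\sigma^{a}>0$ on it; this is possible since $a<1$ allows $|\arg\sigma|<\pi/(2a)$ with $\pi/(2a)>\pi/2$. Then swap the integrals by Fubini and use
\[
\int_0^\infty t^{z-1}e^{-t\sigma^a}\dd t=\Gamma(z)\sigma^{-az}.
\]
The result is
\[
\Gamma(z)\frac{1}{2\pi i}\int_{Ha}e^{\sigma}\sigma^{-\nu-az}\dd\sigma=\frac{\Gamma(z)}{\Gamma(\nu+az)}.
\]
Both sides are analytic in $z$ on the region of absolute convergence, the left side by the tail bound and dominated convergence. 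Analytic continuation therefore extends the identity to $\Rea z>0$, and to $\Rea z>-1$ in the integer case. In that case $\Gamma(z)/\Gamma(\nu+az)$ has a removable singularity at $z=0$, since $\Gamma(\nu+az)$ also has a pole there.

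\textbf{Main obstacle.} The delicate step is justifying Fubini on the Hankel contour. The contour must be arranged so that $\Rea\sigma^a\ge c|\sigma|^a$ along its unbounded rays, for instance rays at angles $\pm\theta$ with $\pi/2<\theta<\min(\pi,\pi/(2a))$. At the same time $e^{\sigma}$ must still decay on those rays, which requires $\theta>\pi/2$. With this choice the double integral is absolutely convergent for $0<\Rea z<1$, the restriction $\Rea z<1$ handling behaviour near $\sigma=0$ if the contour passes close to it. The tail estimate, though more technical, is standard.
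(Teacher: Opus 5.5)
Your proof is correct, but it takes a different route from the paper's.

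\textbf{The paper's route.} The paper takes the tail bound from the Wright and Paris asymptotics. It obtains the Mellin transform only for a base index \(q=\nu+Na\geq0\), using the Ferreira--Simon fractional-moment formula for the probabilistic Wright density. It then reaches arbitrary real \(\nu\) through three steps: the recurrence \(\frac{\dd^N}{\dd t^N}W_{-a,q}(-t)=(-1)^NW_{-a,\nu}(-t)\), \(N\) integrations by parts on \(\Rea z>N\), and the identity theorem.

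\textbf{Your route.} You insert the Hankel representation on rays at angles \(\pm\theta\) with \(\pi/2<\theta<\min(\pi,\pi/(2a))\). There \(\Rea\sigma^a=\lvert\sigma\rvert^a\cos(a\theta)>0\). You then evaluate the transform in a single Fubini step, using only Euler's integral and Hankel's formula for \(1/\Gamma\).

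\textbf{Comparison.} Your approach treats every real \(\nu\) uniformly and needs no probabilistic input. It also avoids checking, at each integration by parts, that the boundary terms of the shifted functions \(W_{-a,q-ja}\) vanish. The same contour representation yields the tail estimate, which the paper only cites. The paper's argument is shorter because it delegates the analytic work to established results.

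\textbf{Two minor corrections.} First, the restriction \(0<\Rea z<1\) is unnecessary, and the reason you give for it is off. On a keyhole contour with a fixed small circle of radius \(\varepsilon\), you have \(\Rea\sigma^a\geq\varepsilon^a\cos(a\theta)>0\) on the circle. Hence the double integral converges absolutely for every \(\Rea z>0\). If the contour did pass through \(\sigma=0\), the relevant condition would be \(\nu+a\Rea z<1\), not \(\Rea z<1\). Since you continue analytically afterwards, this does no harm.

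Second, your ``polynomial length factor'' sketch for the tail bound can be made rigorous as follows.
\begin{enumerate}
\item Move the contour to the vertical line \(\Rea\sigma=\sigma_0\) with \(\sigma_0=(at)^{1/(1-a)}\), and scale \(\sigma=t^{1/(1-a)}\tau\).
\item Log-concavity of \(\cos\) gives \(\cos(a\varphi)\geq(\cos\varphi)^a\) for \(\lvert\varphi\rvert<\pi/2\). Hence \(\Rea(\tau-\tau^a)\leq-(1-a)a^{a/(1-a)}\) on the line \(\Rea\tau=a^{1/(1-a)}\).
\item Also \(\Rea(\tau-\tau^a)\to-\infty\) along that line, so the remaining integral is finite.
\end{enumerate}
This yields the tail bound with \(b\) equal to the exact rate \((1-a)a^{a/(1-a)}\), and a power of \(t\) depending on \(\nu\).
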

	The gamma quotient in \eqref{eq:wright-mellin} is classical at the level of
	Mellin--Barnes inversion. In particular, Luchko's representation
	\cite[Eq.~(4), p.~245]{Luchko2019} has kernel
	\(\Gamma(s)/\Gamma(\beta-\rho s)\); the substitution
	\(\rho=-a\), \(\beta=\nu\) gives \(\Gamma(s)/\Gamma(\nu+as)\).
	The contour representation identifies the classical gamma quotient but does not
	by itself determine the sharp strip of the ordinary Mellin integral. In the
	probabilistic range \(\nu\geq0\), Ferreira and Simon give the normalized density
	and its ordinary Mellin moments
	\cite[Eqs.~(3)--(4), p.~310]{FerreiraSimon2024}. Appendix~\ref{app:mellin}
	establishes the ordinary integral for every real \(\nu\), including the exact
	lower boundary and the removable value at \(z=0\) when
	\(\nu\in\mathbb Z_{\leq0}\). Thus the classical component is the
	Mellin--Barnes gamma quotient; the extension needed here is the ordinary-integral
	statement on its sharp absolute-convergence domain.
	
	\begin{proposition}[Interior kernel and moments]\label{prop:kernel}
		Suppose \(0<\alpha<\kappa\). Then
		\(K_{\alpha,\beta}^{\gamma,\kappa}\in L^1(0,\infty)\), and
		\(\int_0^\infty u^n\lvert K_{\alpha,\beta}^{\gamma,\kappa}(u)\rvert\dd u<\infty\)
		for every \(n\in\mathbb N_0\). Moreover,
		\begin{equation}\label{eq:kernel-mellin}
			\int_0^\infty u^{s-1}
			K_{\alpha,\beta}^{\gamma,\kappa}(u)\dd u
			=\frac{\Gamma(\gamma+\kappa(s-1))}
			{\Gamma(\gamma)\Gamma(\beta+\alpha(s-1))}
		\end{equation}
		on the maximal fundamental strip
		\begin{equation}\label{eq:kernel-strip}
			\Rea s>
			\begin{cases}
				1-\gamma/\kappa,&\mu\notin\mathbb Z_{\leq0},\\
				1-(\gamma+1)/\kappa,&\mu\in\mathbb Z_{\leq0}.
			\end{cases}
		\end{equation}
		In particular,
		\begin{align}
			\int_0^\infty K_{\alpha,\beta}^{\gamma,\kappa}(u)\dd u
			&=\frac1{\Gamma(\beta)},\label{eq:kernel-mass}\\
			\int_0^\infty u^nK_{\alpha,\beta}^{\gamma,\kappa}(u)\dd u
			&=\frac{\Gamma(\gamma+\kappa n)}
			{\Gamma(\gamma)\Gamma(\beta+\alpha n)}.
			\label{eq:kernel-moments}
		\end{align}
		If \(\mu\geq0\), the kernel is strictly positive on \((0,\infty)\).
	\end{proposition}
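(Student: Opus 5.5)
The plan is to reduce everything to \Cref{lem:wright-mellin} by the substitution \(t=u^{1/\kappa}\), i.e. \(u=t^\kappa\), \(\dd u=\kappa t^{\kappa-1}\dd t\). With \(a=\alpha/\kappa\in(0,1)\) and \(\nu=\mu\), we get
\[
\int_0^\infty u^{s-1}K_{\alpha,\beta}^{\gamma,\kappa}(u)\dd u
=\frac{1}{\Gamma(\gamma)}\int_0^\infty t^{\kappa(s-1)+\gamma-1}W_{-a,\mu}(-t)\dd t .
\]
Absolute convergence of the left side is equivalent to absolute convergence of the right side, because the substitution is a measure-preserving change of variables for \(\lvert\cdot\rvert\). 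Applying \eqref{eq:wright-mellin} with \(z=\gamma+\kappa(s-1)\) gives
\[
\frac{\Gamma(\gamma+\kappa(s-1))}{\Gamma(\gamma)\Gamma\bigl(\mu+a(\gamma+\kappa(s-1))\bigr)}.
\]
Now \(\mu+a\gamma+\alpha(s-1)=\beta+\alpha(s-1)\), which is \eqref{eq:kernel-mellin}.

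The strip condition \(\Rea z>0\), respectively \(\Rea z>-1\) when \(\mu\in\mathbb Z_{\le0}\), translates exactly into \eqref{eq:kernel-strip}. The sharpness statement of the lemma transfers directly, so the strip is maximal.

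Next, the integrability claims. Since \(\gamma>0\), the value \(s=1\) lies in the strip in both cases, so \(K\in L^1\). For integer moments, \(s=n+1\) also lies in the strip. Alternatively, one can read integrability at infinity off the tail bound \eqref{eq:wright-tail}: after substitution it becomes a bound \(\exp\{-bu^{1/(\kappa-\alpha)}\}\) times a power, which beats any polynomial. Near \(0\), the factor \(u^{\gamma/\kappa-1+n}\) is integrable because \(W_{-a,\mu}\) is entire and hence bounded near \(0\). Evaluating the Mellin formula at \(s=1\) gives \eqref{eq:kernel-mass}, and at \(s=n+1\) gives \eqref{eq:kernel-moments}. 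When \(\mu\in\mathbb Z_{\le0}\) the reciprocal gamma is entire, so no interpretation issue arises at these points.

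Finally, positivity for \(\mu\ge0\). This is the step needing an outside input, and I expect it to be the main obstacle, because for general \(\mu\) the kernel genuinely changes sign. The plan is as follows.

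\emph{Case \(\mu=0\).} Here \(W_{-a,0}(-t)\) is, up to the factor \(a t\), the one-sided \(a\)-stable density evaluated at a power of \(t\). More precisely, the M-Wright function satisfies \(M_a(t)=W_{-a,1-a}(-t)\) and \(W_{-a,0}(-t)=a t\,M_a(t)\). Strict positivity of \(M_a\) on \((0,\infty)\) is classical, via Pollard or the stable law, and this gives positivity for \(\mu=0\).

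\emph{Case \(\mu>0\).} Here I would use the Erdélyi--Kober/Riemann--Liouville type identity expressing \(W_{-a,\mu}(-t)\) as a positive integral average of \(W_{-a,0}\). Concretely, the Mellin transform \(\Gamma(z)/\Gamma(\mu+az)\) factors as
\[
\frac{\Gamma(z)}{\Gamma(az)}\cdot\frac{\Gamma(az)}{\Gamma(az+\mu)} .
\]
The second factor is the Mellin transform of a beta-type positive kernel \(\Gamma(\mu)^{-1}(1-r^{a})^{\mu-1}\)-like function, after the substitution \(r\mapsto r^{1/a}\). Hence \(W_{-a,\mu}(-\cdot)\) is a Mellin convolution of two positive functions, so it is strictly positive. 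Uniqueness of Mellin transforms on the common strip justifies identifying the convolution with the Wright function. Equivalently, one can cite the Ferreira--Simon densities \cite{FerreiraSimon2024}, which are exactly normalized versions of \(t^{\nu-1}\)-type weightings of \(W_{-a,\nu}(-t)\) for \(\nu\ge0\).

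The prefactor \(u^{\gamma/\kappa-1}/(\kappa\Gamma(\gamma))\) is positive, so strict positivity of \(K\) follows.
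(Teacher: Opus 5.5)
Your substitution \(u=t^\kappa\), the translation \(z=\gamma+\kappa(s-1)\) of both strip cases with the sharpness carried over, the integrability argument from \eqref{eq:wright-tail} and boundedness at the origin, and the evaluations at \(s=1\) and \(s=n+1\) are exactly the paper's argument. Your \(\mu=0\) case is the paper's cancellation identity \eqref{eq:cancellation} with \(W_{-a,1-a}(-t)=M_a(t)\).

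The genuine difference is strict positivity for \(\mu>0\). The paper imports it from Ferreira--Simon: they identify \(\Gamma(a+\mu)W_{-a,\mu}(-t)\) as a probability density and classify its positive zeros. You instead derive it from the \(\mu=0\) case through the Mellin factorization \(\Gamma(z)/\Gamma(\mu+az)=[\Gamma(z)/\Gamma(az)]\,[\Gamma(az)/\Gamma(az+\mu)]\). Made precise, the argument runs as follows:
\begin{enumerate}
\item The second factor is the Mellin transform of \(g(\rho)=(a\Gamma(\mu))^{-1}(1-\rho^{1/a})^{\mu-1}\one_{(0,1)}(\rho)\).
\item The first factor is the Mellin transform of \(W_{-a,0}(-t)\), by \Cref{lem:wright-mellin} with \(\nu=0\).
\item The convolution \(h(t)=\int_0^1W_{-a,0}(-t/\rho)\,g(\rho)\,\dd\rho/\rho\) converges for each \(t>0\), because the stretched-exponential decay \eqref{eq:wright-tail} absorbs the factor \(1/\rho\) near \(0\). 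It is continuous on \((0,\infty)\) by dominated convergence.
\item By Tonelli, \(h\) has absolutely convergent Mellin transform \(\Gamma(z)/\Gamma(\mu+az)\) on \(\Rea z>0\).
\item Fourier uniqueness on one vertical line, together with continuity, gives \(W_{-a,\mu}(-t)=h(t)\) for every \(t>0\).
\item Finally \(h(t)>0\), since the integrand is strictly positive on \((0,1)\).
\end{enumerate}

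Your route is self-contained. It rests only on the lemma's Mellin identity and the classical positivity of the M-Wright (one-sided stable) function. It gives strict positivity at every point directly, plus an explicit Erd\'elyi--Kober-type representation. The paper's citation is shorter but delegates both positivity and the absence of zeros to Ferreira--Simon.

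One correction to your aside: the Ferreira--Simon density is \(\Gamma(a+\nu)W_{-a,\nu}(-t)\) itself, with no \(t^{\nu-1}\) weighting. Power weights enter only through the power bias in \Cref{rem:probability-law}.
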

	
	\begin{proof}
		Put \(u=t^\kappa\).  Then
		\begin{equation}\label{eq:pushforward}
			K_{\alpha,\beta}^{\gamma,\kappa}(u)\dd u
			=\frac{t^{\gamma-1}}{\Gamma(\gamma)}
			W_{-a,\mu}(-t)\dd t.
		\end{equation}
		The strip \eqref{eq:kernel-strip} follows by putting
		\(z=\gamma+\kappa(s-1)\) in the two cases of
		\Cref{lem:wright-mellin}.
		Local analyticity at zero and \eqref{eq:wright-tail} give absolute
		integrability and all absolute moments.  Applying
		\eqref{eq:wright-mellin} with
		\(z=\gamma+\kappa(s-1)\) proves \eqref{eq:kernel-mellin}; the remaining
		identities follow by taking \(s=1\) and \(s=n+1\).
		
		For \(0<a<1\) and \(\mu\geq0\), Ferreira and Simon identify
		\(\Gamma(a+\mu)W_{-a,\mu}(-t)\) as a probability density and prove the
		corresponding positive-zero classification
		\cite[Eqs.~(3)--(4), p.~310, and Section~6.2, p.~329]{FerreiraSimon2024}. Hence
		the density is strictly positive.  At \(\mu=0\), this conclusion also follows
		directly from
		\begin{equation}\label{eq:cancellation}
			W_{-a,0}(-t)=at\,W_{-a,1-a}(-t),
		\end{equation}
		obtained by shifting the defining series.  The positive factors in
		\eqref{eq:pushforward} preserve the sign.
	\end{proof}
	
	\begin{remark}[Probability-law interpretation]
		\label{rem:probability-law}
		Assume \(\mu\geq0\), and let \(M_{a,\mu}\) denote the Ferreira--Simon
		variable with density
		\[
			p_{a,\mu}(t)=\Gamma(a+\mu)W_{-a,\mu}(-t),\qquad t>0.
		\]
		Since \(\gamma-1>-1\), their moment formula gives
		\begin{equation}\label{eq:power-bias-normalizer}
			\mathbb E[M_{a,\mu}^{\gamma-1}]
			=\frac{\Gamma(\gamma)\Gamma(a+\mu)}
			{\Gamma(a+\mu+a(\gamma-1))}
			=\frac{\Gamma(\gamma)\Gamma(a+\mu)}{\Gamma(\beta)},
		\end{equation}
		because \(\mu=\beta-a\gamma\).  Therefore the
		\((\gamma-1)\)-power bias of \(M_{a,\mu}\) has density
		\begin{equation}\label{eq:power-biased-density}
			p^{[\gamma-1]}_{a,\mu}(t)
			=\frac{\Gamma(\beta)}{\Gamma(\gamma)}
			t^{\gamma-1}W_{-a,\mu}(-t).
		\end{equation}
		Combining \eqref{eq:power-biased-density} with \eqref{eq:pushforward} yields
		\begin{equation}\label{eq:power-bias-pushforward}
			\Gamma(\beta)K_{\alpha,\beta}^{\gamma,\kappa}(u)\dd u
			=(t\mapsto t^\kappa)_{\#}
			\bigl[p^{[\gamma-1]}_{a,\mu}(t)\dd t\bigr].
		\end{equation}
		Thus the probability-law interpretation in the admissible interior follows
		directly from the Ferreira--Simon density and moment formula. For
		\(\mu<0\), the same algebraic kernel is used only as a finite signed density;
		Ferreira and Simon already note the Mellin-zero obstruction to a nonnegative
		Wright density outside their probabilistic range
		\cite[discussion following Eq.~(4), p.~310]{FerreiraSimon2024}.
	\end{remark}
	
	If \(\mu\geq0\), \eqref{eq:realization} is therefore the Laplace transform of
	a finite positive measure.  Bernstein's theorem gives complete monotonicity,
	and differentiation under the integral gives, for every \(n\geq0\),
	\begin{equation}\label{eq:strict-derivative}
		(-1)^n\frac{\dd^n}{\dd x^n}
		\Ecal_{\alpha,\beta}^{\gamma,\kappa}(x)
		=\int_0^\infty u^ne^{-xu}K_{\alpha,\beta}^{\gamma,\kappa}(u)\dd u>0.
	\end{equation}
	
	\begin{proposition}[The boundary \(\alpha=\kappa\)]\label{prop:boundary}
		If \(\alpha=\kappa\) and \(\beta>\gamma\), then
		\begin{equation}\label{eq:beta-laplace}
			E_{\kappa,\beta}^{\gamma,\kappa}(-x)
			=\int_0^1e^{-xu}K_{=}(u)\dd u,
		\end{equation}
		where \(K_=\) is given by \eqref{eq:beta-density}.  After multiplication
		by \(\Gamma(\beta)\), this is the law of \(U=T^\kappa\) with
		\(T\sim\operatorname{Beta}(\gamma,\beta-\gamma)\).  If
		\(\beta=\gamma\), the representation is the atom \eqref{eq:atom}.
	\end{proposition}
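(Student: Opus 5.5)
We prove \eqref{eq:beta-laplace} by matching power series: both sides are entire in \(x\), so it suffices to compare moments. Here \(\alpha=\kappa\) gives \(\Delta=1>0\), so the series \eqref{eq:series} is entire. The right side is the Laplace transform of a compactly supported finite measure on \([0,1]\), hence also entire, and its Taylor coefficients are \((-1)^n\int_0^1u^nK_=(u)\dd u/n!\). It therefore suffices to show
\[
\int_0^1u^nK_=(u)\dd u=\frac{\Gamma(\gamma+\kappa n)}{\Gamma(\gamma)\Gamma(\beta+\kappa n)},\qquad n\in\mathbb N_0.
\]
Termwise integration is justified because \(e^{-xu}\) is bounded on \([0,1]\) and \(K_=\ge0\) is integrable: near \(u=0\) it behaves like \(u^{\gamma/\kappa-1}\), and near \(u=1\) like \((1-u)^{\beta-\gamma-1}\), which is integrable since \(\beta>\gamma\).

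For the moments, substitute \(u=t^\kappa\), so that \(\dd u=\kappa t^{\kappa-1}\dd t\). Then \(u^{\gamma/\kappa-1}\dd u=\kappa t^{\gamma-1}\dd t\), and
\[
K_=(u)\dd u=\frac{t^{\gamma-1}(1-t)^{\beta-\gamma-1}}{\Gamma(\gamma)\Gamma(\beta-\gamma)}\one_{(0,1)}(t)\dd t .
\]
The factor \(\kappa\) cancels. This shows directly that \(\Gamma(\beta)K_=(u)\dd u\) is the law of \(T^\kappa\) with \(T\sim\operatorname{Beta}(\gamma,\beta-\gamma)\). The \(n\)-th moment becomes
\[
\frac{B(\gamma+\kappa n,\beta-\gamma)}{\Gamma(\gamma)\Gamma(\beta-\gamma)}=\frac{\Gamma(\gamma+\kappa n)}{\Gamma(\gamma)\Gamma(\beta+\kappa n)},
\]
which is the required coefficient. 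Taking \(n=0\) recovers the total mass \(1/\Gamma(\beta)\), consistent with \eqref{eq:value-zero}.

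For \(\beta=\gamma\) with \(\alpha=\kappa\), the coefficient ratio \(\Gamma(\gamma+\kappa n)/\Gamma(\beta+\alpha n)\) equals \(1\) for every \(n\). The series is then \(\Gamma(\gamma)^{-1}\sum_n(-x)^n/n!=e^{-x}/\Gamma(\gamma)\), which is the Laplace transform of \(\Gamma(\gamma)^{-1}\delta_1\). This gives \eqref{eq:atom}, and it agrees with \eqref{eq:realization-series} because \(\alpha\ge\kappa\).

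There is no real obstacle. The only point needing care is the justification of termwise integration and the identification of the entire functions, and both are immediate from compact support. One may also note as a sanity check, though it is not needed, that the atom is the weak limit of the beta laws as \(\beta\downarrow\gamma\).
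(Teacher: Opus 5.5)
Your proposal is correct and follows essentially the same route as the paper. You substitute \(u=t^\kappa\), evaluate the moments by Euler's beta integral, justify termwise integration of the exponential series by the compact support, and settle \(\beta=\gamma\) by cancelling the gamma factors in every coefficient; the paper's further identification with the Ferreira--Simon and Karp--Prilepkina laws and the weak limit to the atom are, as you say, not needed for the statement.
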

	
	\begin{proof}
		For \(\beta>\gamma\), substitute \(u=t^\kappa\) in the moments of
		\eqref{eq:beta-density}.  Euler's beta integral gives
		\[
		\int_0^1u^nK_=(u)\dd u
		=\frac{\Gamma(\gamma+\kappa n)}
		{\Gamma(\gamma)\Gamma(\beta+\kappa n)}.
		\]
		The exponential series is absolutely integrable on the compact support, and
		\eqref{eq:beta-laplace} follows.  When \(\beta=\gamma\), the two gamma
		factors in every coefficient of \eqref{eq:series} cancel, giving
		\(e^{-x}/\Gamma(\gamma)\).  The latter case is atomic and is not
		obtained by substituting \(\beta=\gamma\) into the singular beta density.
		
		These endpoints follow from the Ferreira--Simon \(a=1\) base law by the same
		power-bias and deterministic-pushforward calculation as in
		\Cref{rem:probability-law}. At \(a=1\), their base variable is
		\(\operatorname{Beta}(1,\beta-\gamma)\); its \((\gamma-1)\)-power bias is
		\(\operatorname{Beta}(\gamma,\beta-\gamma)\), and
		\(t\mapsto t^\kappa\) gives \eqref{eq:beta-density}. In the notation of
		Karp and Prilepkina this is the specialization
		\[
		(A_1,\alpha_1,\beta_1)
		=(\kappa,\gamma,\beta-\gamma)
		\]
		of their beta-power law
		\cite[Remark~4, p.~150]{KarpPrilepkina2016}.  In the class (C2) of Beghin,
		Cristofaro, and da Silva the same normalized law is obtained from
		\[
		(p,\eta,e,b_0,K_2)
		=\left(1,\kappa,\gamma-\kappa,\beta-\gamma,
		\frac{\Gamma(\gamma)}{\Gamma(\beta)}\right),
		\]
		while their class (C0) contains the atom
		\cite[Theorem~8(C0,C2), pp.~13--14]{BeghinCristofaroDaSilva2025}.
		To verify the endpoint limit, let \(\varepsilon=\beta-\gamma\downarrow0\) and
		\(T_\varepsilon\sim\operatorname{Beta}(\gamma,\varepsilon)\).  Since
		\[
		\mathbb E(1-T_\varepsilon)
		=\frac{\varepsilon}{\gamma+\varepsilon}\longrightarrow0,
		\]
		we have \(T_\varepsilon^\kappa\to1\) in probability and hence
		\begin{equation}\label{eq:beta-atom-limit}
			\frac{1}{\Gamma(\gamma+\varepsilon)}
			\operatorname{Law}(T_\varepsilon^\kappa)
			\Longrightarrow \frac{1}{\Gamma(\gamma)}\delta_1.
		\end{equation}
		This weak zero-balanced transition is also covered by
		\cite[Corollary~2.1, p.~356, and Theorem~3,
		pp.~357--361]{KarpPrilepkina2017}.
	\end{proof}
	
	The preceding propositions prove every admissible row of
	\Cref{tab:strata}, including strict complete monotonicity.
	
	\section{Necessity}
	
	The three excluded regions as listed in Table~\ref{tab:strata} require different arguments.
	
	\subsection[Negative mu in the interior]{Negative \texorpdfstring{\(\mu\)}{mu} in the interior:
		\texorpdfstring{\(0<\alpha<\kappa\) and \(\mu<0\)}{alpha below kappa and negative mu}}

	The Mellin-zero obstruction for a Wright density with negative second parameter
	is already noted by Ferreira and Simon
	\cite[discussion following Eq.~(4), p.~310]{FerreiraSimon2024}. Here it is
	combined with the pushforward identity and finite signed Laplace uniqueness to
	rule out any alternative positive Bernstein measure for the present realization.
	
	\begin{proposition}\label{prop:negative-mu}
		If \(0<\alpha<\kappa\) and \(\mu<0\), then
		\(\Ecal_{\alpha,\beta}^{\gamma,\kappa}\notin\CM(0,\infty)\).
	\end{proposition}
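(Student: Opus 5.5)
Suppose, for contradiction, that \(\Ecal_{\alpha,\beta}^{\gamma,\kappa}\in\CM(0,\infty)\). Bernstein's theorem then gives a positive measure \(M\) on \([0,\infty)\) with \(\Ecal_{\alpha,\beta}^{\gamma,\kappa}(x)=\int e^{-xu}M(\dd u)\) for \(x>0\). By \Cref{prop:kernel}, \(K:=K_{\alpha,\beta}^{\gamma,\kappa}\) is absolutely integrable, so \(\Ecal\) is bounded near \(0\). Monotone convergence as \(x\downarrow0\) therefore shows that \(M\) is finite, with \(M([0,\infty))=\lim_{x\downarrow0}\Ecal(x)=\int K=1/\Gamma(\beta)\). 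Now \(M-K(u)\dd u\) is a finite signed measure whose Laplace transform vanishes on \((0,\infty)\). Uniqueness of Laplace transforms of finite signed measures (via Stone--Weierstrass on \(u\mapsto e^{-xu}\), or analyticity in \(\Rea x>0\) plus the Fourier uniqueness on the boundary) gives \(M=K(u)\dd u\). The plan therefore reduces to showing that \(K\) takes negative values on a set of positive measure.

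To show this, I use the Mellin transform. By \eqref{eq:kernel-mellin}, for real \(s\) in the strip \eqref{eq:kernel-strip},
\[
\int_0^\infty u^{s-1}K(u)\dd u=\frac{\Gamma(\gamma+\kappa(s-1))}{\Gamma(\gamma)\Gamma(\beta+\alpha(s-1))}.
\]
If \(K\ge0\) a.e., the left side would be strictly positive for every real \(s\) in the strip, since \(K\not\equiv0\) because its mass is \(1/\Gamma(\beta)\neq0\). It therefore suffices to find a real \(s\) in the strip with \(\Gamma(\gamma+\kappa(s-1))\) finite and positive and \(1/\Gamma(\beta+\alpha(s-1))\le0\).

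To find such an \(s\), write \(z=\gamma+\kappa(s-1)\), so that the denominator argument is \(\mu+az\). For \(\mu\notin\mathbb Z_{\le0}\) the strip is \(z>0\), where \(\Gamma(z)>0\). As \(z\) decreases to \(0^+\), the quantity \(\mu+az\) decreases to \(\mu<0\). Choose \(z_0=(-\mu-\lceil-\mu\rceil+1)/a\)\dots more simply, let \(m\) be the largest nonpositive integer strictly greater than \(\mu\) (so \(m\in\{0,-1,\dots\}\) with \(\mu<m\)) and take \(z_0=(m-\mu)/a>0\). Then \(1/\Gamma(\mu+az_0)=1/\Gamma(m)=0\), which is a zero of the Mellin transform at an interior point. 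This contradicts strict positivity. For \(\mu\in\mathbb Z_{\le0}\), and in particular \(\mu\le-1\), the strip is \(z>-1\). There the same choice \(m=\mu+1\le0\) gives \(z_0=1/a>0\) with \(1/\Gamma(m)=0\), again inside the strip with \(\Gamma(z_0)>0\). In both cases the vanishing integral of a nonnegative, not-a.e.-zero function times \(u^{s-1}>0\) is impossible.

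The main obstacle is ensuring that the Mellin identity is valid, as an absolutely convergent ordinary integral, at the chosen zero. This is exactly what \Cref{lem:wright-mellin} and the sharp strip in \Cref{prop:kernel} provide, and it is why I choose \(z_0>0\) in every case. A secondary point is the finiteness of \(M\), which I obtain from the boundedness of \(\Ecal\) at \(0^+\) as above. Together these show that \(K\) is sign-changing and that no positive Bernstein measure exists, so \(\Ecal_{\alpha,\beta}^{\gamma,\kappa}\notin\CM(0,\infty)\).
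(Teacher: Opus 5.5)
Your proposal is correct and is essentially the paper's proof. A zero of the Mellin transform at an interior point $z_0>0$, where $1/\Gamma(\mu+az_0)=0$, shows that $K$ has a nontrivial negative part, and finiteness of the hypothetical Bernstein measure together with uniqueness for finite signed Laplace transforms forces that measure to equal $K(u)\,du$, a contradiction. (Cosmetic only: any nonpositive integer $m>\mu$ works, and the ``largest'' such $m$ is $m=0$, which gives exactly the paper's choice $z_0=-\mu/a$.)
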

	
	\begin{proof}
		Set \(z_0=-\mu/a>0\).  The integral in
		\eqref{eq:wright-mellin} is absolutely convergent at \(z_0\), and
		\begin{equation}\label{eq:mellin-zero}
			\int_0^\infty t^{z_0-1}W_{-a,\mu}(-t)\dd t
			=\frac{\Gamma(z_0)}{\Gamma(0)}=0,
		\end{equation}
		where \(1/\Gamma(0)=0\).  The analytic function
		\(W_{-a,\mu}(-t)\) is not identically zero: if every series coefficient
		vanished, then both \(\mu\) and \(\mu-a\) would be nonpositive integers,
		which would force \(a\) to be an integer, contrary to \(0<a<1\).  Hence it cannot be
		nonnegative on \((0,\infty)\), since the weight in
		\eqref{eq:mellin-zero} is strictly positive.  It cannot be nonpositive
		either: by \eqref{eq:pushforward} and \eqref{eq:kernel-mass},
		\[
		\int_0^\infty t^{\gamma-1}W_{-a,\mu}(-t)\dd t
		=\frac{\Gamma(\gamma)}{\Gamma(\beta)}>0.
		\]
		Thus the Wright function, and therefore \(K\), has both signs.  Since it is
		continuous and nonzero, its positive and negative sets each contain a
		nonempty open interval.
		
		Suppose nevertheless that \(\Ecal\) were completely monotone.  Bernstein's
		theorem would give a positive representing measure \(N\).  For the signed
		kernel, dominated convergence (using \(K\in L^1\)) gives
		\(\Ecal(x)\to\int K=1/\Gamma(\beta)\) as \(x\downarrow0\).  For \(N\),
		monotone convergence gives the same limit, so
		\(N([0,\infty))=1/\Gamma(\beta)<\infty\).  The finite signed measure
		\(K(u)\dd u-N(\dd u)\) has zero Laplace transform.  The signed uniqueness
		lemma in \Cref{app:measure} forces \(N(\dd u)=K(u)\dd u\), contradicting the
		negative part of \(K\).
	\end{proof}
	
	\subsection{The boundary below the beta range}
	
	For \(\alpha\geq\kappa\), the defining series is entire.  If its negative-axis
	restriction were completely monotone with Bernstein measure \(N\), then
	differentiating its Laplace representation for \(x>0\) is legitimate because
	\(u^ne^{-xu}\leq C_{n,x}e^{-xu/2}\).  Applying monotone convergence to
	\(u^ne^{-xu}\) as \(x\downarrow0\) then identifies every moment:
	\begin{equation}\label{eq:hypothetical-moments}
		m_n:=\int_{[0,\infty)}u^nN(\dd u)
		=\frac{\Gamma(\gamma+\kappa n)}
		{\Gamma(\gamma)\Gamma(\beta+\alpha n)}.
	\end{equation}
	The support lemma in \Cref{app:measure} says that, for a finite positive measure,
	\(\lim m_n^{1/n}\) equals the upper endpoint of its support when the limit is
	finite.
	
	\begin{proposition}\label{prop:boundary-excluded}
		If \(\alpha=\kappa\) and \(\beta<\gamma\), then
		\(E_{\kappa,\beta}^{\gamma,\kappa}(-\,\cdot)\) is not completely
		monotone.
	\end{proposition}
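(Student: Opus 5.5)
We argue by contradiction. Assume that \(E_{\kappa,\beta}^{\gamma,\kappa}(-\,\cdot)\in\CM(0,\infty)\) and let \(N\) be its Bernstein measure. Since \(\alpha=\kappa\) gives \(\Delta=1>0\), the series is entire, so the preceding discussion applies: \(N\) is a finite positive measure whose moments are given by \eqref{eq:hypothetical-moments} with \(\alpha=\kappa\),
\[
m_n=\frac{\Gamma(\gamma+\kappa n)}{\Gamma(\gamma)\Gamma(\beta+\kappa n)},\qquad n\ge0 .
\]
The total mass is \(m_0=1/\Gamma(\beta)>0\), so \(N\) is not the zero measure.

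The first step is the asymptotics of these moments. By the standard ratio \(\Gamma(x+c)/\Gamma(x)\sim x^{c}\), applied with \(x=\beta+\kappa n\) and \(c=\gamma-\beta>0\), we obtain
\[
m_n\sim\frac{(\kappa n)^{\gamma-\beta}}{\Gamma(\gamma)}\longrightarrow\infty .
\]
Moreover \(m_n^{1/n}=\exp\{O(\log n)/n\}\to1\). This is a finite limit, so the support lemma of \Cref{app:measure} applies and gives \(\sup\operatorname{supp}N=1\). In particular \(N\) is concentrated on \([0,1]\).

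The second step derives the contradiction. Because \(u^n\le1\) on \([0,1]\), we get \(m_n=\int_{[0,1]}u^n\,N(\dd u)\le N([0,1])=m_0=1/\Gamma(\beta)\) for every \(n\). This contradicts \(m_n\to\infty\). Alternatively, one can use that \(m_n\) is nonincreasing for a measure supported in \([0,1]\), while the ratio \(m_{n+1}/m_n=\Gamma(\gamma+\kappa n+\kappa)\Gamma(\beta+\kappa n)/\bigl(\Gamma(\gamma+\kappa n)\Gamma(\beta+\kappa n+\kappa)\bigr)\) exceeds \(1\) whenever \(\gamma>\beta\), since \(x\mapsto\Gamma(x+\kappa)/\Gamma(x)\) is increasing by log-convexity of \(\Gamma\). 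Either route suffices; we take the bounded-moment argument as primary.

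The only delicate point is the justification of \eqref{eq:hypothetical-moments} and of the support lemma, and both are taken as given from the preceding paragraph and \Cref{app:measure}. If one prefers not to invoke the support lemma, a direct argument also works. If \(N((1+\varepsilon,\infty))>0\) for some \(\varepsilon>0\), then \(m_n\ge(1+\varepsilon)^nN((1+\varepsilon,\infty))\) grows exponentially, contradicting the polynomial growth of \(m_n\). Hence \(N\) is supported in \([0,1]\), and the bound \(m_n\le m_0\) finishes the proof as before. This self-contained version avoids any subtlety about the limit of \(m_n^{1/n}\) and is the version we would actually write out.
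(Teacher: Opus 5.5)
Your proposal is correct and follows the paper's proof: the hypothetical Bernstein moments \(m_n=\Gamma(\gamma+\kappa n)/(\Gamma(\gamma)\Gamma(\beta+\kappa n))\sim(\kappa n)^{\gamma-\beta}/\Gamma(\gamma)\) satisfy \(m_n^{1/n}\to1\), which confines the measure to \([0,1]\) and forces \(m_n\le m_0\), contradicting \(m_n\to\infty\). Your preferred self-contained \((1+\varepsilon)^n\) argument is the lower-bound half of the paper's support lemma (\Cref{lem:moment-root}) written out inline. Your log-convexity variant, which gives \(m_1>m_0\), is a valid shortcut for the last step once support in \([0,1]\) is known.
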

	
	\begin{proof}
		Stirling's formula gives
		\begin{equation}\label{eq:equal-moment-asymptotic}
			m_n\sim\frac{(\kappa n)^{\gamma-\beta}}{\Gamma(\gamma)},
			\qquad m_n^{1/n}\longrightarrow1.
		\end{equation}
		The hypothetical measure would therefore be supported in \([0,1]\), so
		\(m_n\leq m_0\) for all \(n\).  This contradicts
		\(m_n\to\infty\) in \eqref{eq:equal-moment-asymptotic}.
	\end{proof}
	
	\subsection[The supercritical step]{The supercritical step:
		\texorpdfstring{\(\alpha>\kappa\)}{alpha above kappa}}
	
	\begin{proposition}\label{prop:supercritical}
		If \(\alpha>\kappa\), then
		\(E_{\alpha,\beta}^{\gamma,\kappa}(-\,\cdot)\) is not completely
		monotone for any \(\beta,\gamma>0\).
	\end{proposition}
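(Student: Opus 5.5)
Suppose, for contradiction, that \(E_{\alpha,\beta}^{\gamma,\kappa}(-\,\cdot)\in\CM(0,\infty)\) with Bernstein measure \(N\). Since \(\alpha>\kappa\) gives \(\Delta>1\), the series is entire. The discussion preceding \Cref{prop:boundary-excluded} then applies verbatim. Differentiating the Laplace representation and letting \(x\downarrow0\) by monotone convergence identifies every moment of \(N\) as in \eqref{eq:hypothetical-moments}. In particular \(N\) is finite with \(m_0=1/\Gamma(\beta)\), and
\[
m_1=\frac{\Gamma(\gamma+\kappa)}{\Gamma(\gamma)\Gamma(\beta+\alpha)}>0 .
\]
The plan is to show \(m_n^{1/n}\to0\) and invoke the support lemma of \Cref{app:measure}.

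For the asymptotics I will apply Stirling's formula in the form \(\log\Gamma(c+sn)=sn\log n+sn(\log s-1)+O(\log n)\) to both gamma factors. This gives
\[
\frac1n\log m_n=(\kappa-\alpha)\log n+O(1)\longrightarrow-\infty,
\]
because \(\kappa-\alpha<0\). Hence \(m_n^{1/n}\to0\), a finite limit. The support lemma then says the upper endpoint of \(\operatorname{supp}N\) is \(0\). Since \(N\) is a positive measure on \([0,\infty)\), this forces \(N=c\,\delta_0\) with \(c=m_0=1/\Gamma(\beta)\).

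For \(N=c\,\delta_0\) we have \(m_1=\int u\,N(\dd u)=0\), which contradicts \(m_1>0\). Equivalently, the Laplace transform of \(c\,\delta_0\) is the constant \(1/\Gamma(\beta)\), whereas the series has nonzero linear coefficient \(-m_1x\).

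The only step needing care is the appeal to the support lemma when the limit is \(0\). Its statement covers any finite limit, so \(0\) is included. If I wanted to avoid the lemma, I would argue directly. Suppose \(N((\varepsilon,\infty))>0\) for some \(\varepsilon>0\). Then \(m_n\ge\varepsilon^nN((\varepsilon,\infty))\), so \(\liminf m_n^{1/n}\ge\varepsilon\), contradicting \(m_n^{1/n}\to0\). Hence \(N\) is concentrated at \(0\). Everything else is routine: the justification of the moment identification has already been given before \Cref{prop:boundary-excluded}, and the conclusion holds for all \(\beta,\gamma>0\).
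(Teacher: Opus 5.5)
Your proposal is correct and follows the paper's proof essentially step for step. You identify the moments through the discussion preceding the equal-step exclusion, use Stirling to get \(m_n^{1/n}\to0\), invoke the moment-support lemma to concentrate \(N\) at \(0\), and contradict \(m_1>0\); your log-level estimate \((\kappa-\alpha)\log n+O(1)\) is coarser than the paper's exact equivalent \(e^{\alpha-\kappa}\kappa^\kappa\alpha^{-\alpha}n^{\kappa-\alpha}\) but fully sufficient.
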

	
	\begin{proof}
		Stirling's formula applied to \eqref{eq:hypothetical-moments} yields
		\begin{equation}\label{eq:moment-root-general}
			m_n^{1/n}
			\sim e^{\alpha-\kappa}
			\frac{\kappa^\kappa}{\alpha^\alpha}
			n^{\kappa-\alpha}\longrightarrow0.
		\end{equation}
		The support lemma would force \(N\) to be concentrated at zero, and hence
		\(m_1=0\).  Formula \eqref{eq:hypothetical-moments} gives \(m_1>0\), a
		contradiction.
	\end{proof}
	
	The admissible results of \Cref{prop:kernel,prop:boundary} and the three
	exclusion propositions exhaust \((0,\infty)^4\), proving
	\Cref{thm:realization}.  Intersecting with \(\Delta>0\) and applying
	\Cref{prop:phases}(i) proves \Cref{thm:series}.
	
	\section{The Prabhakar specialization and examples}
	
	Setting \(\kappa=1\) in \eqref{eq:series} gives the Prabhakar function
	\[
	E_{\alpha,\beta}^{\gamma}(z)
	=\sum_{n=0}^\infty
	\frac{\Gamma(\gamma+n)}
	{\Gamma(\gamma)n!\Gamma(\beta+\alpha n)}z^n.
	\]
	Here \(\Delta=\alpha>0\), so no realization qualification is needed.
	
	\begin{corollary}[Prabhakar complete-monotonicity characterization]\label{cor:prabhakar}
		For \(\alpha,\beta,\gamma>0\),
		\begin{equation}\label{eq:prabhakar-iff}
			E_{\alpha,\beta}^{\gamma}(-\,\cdot)
			\in\CM(0,\infty)
			\quad\Longleftrightarrow\quad
			0<\alpha\leq1
			\quad\text{and}\quad
			\beta\geq\alpha\gamma.
		\end{equation}
	\end{corollary}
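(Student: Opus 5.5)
The plan is to obtain \Cref{cor:prabhakar} as the specialization \(\kappa=1\) of \Cref{thm:series}; no new analysis should be needed. First we check the hypotheses. With \(\kappa=1\) the series \eqref{eq:series} becomes term by term the Prabhakar series displayed above, because \(\Gamma(\gamma+n)/\Gamma(\gamma)=(\gamma)_n\). Moreover \(\Delta=1+\alpha-\kappa=\alpha>0\) for every \(\alpha>0\). Hence \Cref{prop:convergence} puts us in the entire regime, and \Cref{thm:series} applies to every positive triple \((\alpha,\beta,\gamma)\).

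Next we substitute \(\kappa=1\) into the criterion \eqref{eq:series-iff}. The condition \(\alpha\leq\kappa\) becomes \(\alpha\leq1\), which together with \(\alpha>0\) is \(0<\alpha\leq1\). The condition \(\kappa\beta\geq\alpha\gamma\) becomes \(\beta\geq\alpha\gamma\). This is exactly \eqref{eq:prabhakar-iff}.

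For completeness, we will record how the strata of \Cref{tab:strata} specialize. The interior case \(0<\alpha<1\), \(\beta\geq\alpha\gamma\) uses the kernel \(u^{\gamma-1}W_{-\alpha,\beta-\alpha\gamma}(-u)/\Gamma(\gamma)\). The boundary case \(\alpha=1\) gives the \(\operatorname{Beta}(\gamma,\beta-\gamma)\) law for \(\beta>\gamma\), and the atom \(e^{-x}/\Gamma(\gamma)\) for \(\beta=\gamma\). The case \(\alpha>1\) is excluded by \Cref{prop:supercritical}.

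The only point needing care is that \(\Ecal\) and \(E\) coincide in this setting. For \(\alpha<1\) this follows from \Cref{prop:phases}(i), and for \(\alpha\geq1\) it holds by \eqref{eq:realization-series}. Both facts are already used in the proof of \Cref{thm:series}, so this step is not a real obstacle.
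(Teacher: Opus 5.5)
Your proposal is correct and follows the paper's route. The corollary is read off from \Cref{thm:series} at \(\kappa=1\), where \(\Delta=\alpha>0\) puts every positive triple in the entire regime, and both the admissible strata and the exclusions of \Cref{prop:negative-mu,prop:boundary-excluded,prop:supercritical} specialize directly. One cosmetic point: at \(\alpha=1\) the Bernstein measures are \(\Gamma(\beta)^{-1}\operatorname{Beta}(\gamma,\beta-\gamma)\) and the atom \(\Gamma(\gamma)^{-1}\delta_1\); the function \(e^{-x}/\Gamma(\gamma)\) is the Laplace transform of that atom, not the atom itself.
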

	
	The sufficient region in \eqref{eq:prabhakar-iff} was established in
	\cite[Theorem~1, p.~120]{GorskaEtAl2021}. For \(0<\alpha<1\) and
	\(\beta<\alpha\gamma\), necessity follows directly from
	\Cref{prop:negative-mu} with \(\kappa=1\); the Mellin-zero mechanism is the
	obstruction already identified for Wright densities by Ferreira and Simon
	\cite[discussion following Eq.~(4), p.~310]{FerreiraSimon2024}. At
	\(\alpha=1\), the excluded range \(\beta<\gamma\) is covered by
	\Cref{prop:boundary-excluded}. The supercritical case \(\alpha>1\),
	including \(\beta\geq\alpha\gamma\), is excluded by
	\Cref{prop:supercritical}. When \(\gamma=1\), the corollary reduces to
	Schneider's classical two-parameter if-and-only-if criterion
	\cite{Schneider1996}. We record the Prabhakar result as a specialization of
	the four-parameter theorem; the neighboring Kilbas--Saigo and Le Roy-type
	characterizations cited in the Introduction concern different deformations.
	The present corollary concerns the unweighted function
	\(E_{\alpha,\beta}^{\gamma}(-x)\), not the relaxation kernel
	\(t^{\beta-1}E_{\alpha,\beta}^{\gamma}(-t^\alpha)\)
	\cite{TomovskiPoganySrivastava2014,MainardiGarrappa2015}.
	
	Three elementary specializations illustrate the excluded boundary, the supercritical step, and an admissible endpoint.
	
	\begin{example}[An excluded beta boundary]\label{ex:linear-exp}
		For \((\alpha,\beta,\gamma,\kappa)=(1,1,2,1)\),
		\begin{equation}\label{eq:linear-exp}
			E_{1,1}^{2,1}(-x)=(1-x)e^{-x},
		\end{equation}
		which becomes negative for \(x>1\).  This is the stratum
		\(\alpha=\kappa,\beta<\gamma\).
	\end{example}
	
	\begin{example}[A supercritical step]\label{ex:cosine}
		For \((\alpha,\beta,\gamma,\kappa)=(2,1,1,1)\),
		\begin{equation}\label{eq:cosine}
			E_{2,1}^{1,1}(-x)
			=\sum_{n=0}^\infty\frac{(-x)^n}{(2n)!}
			=\cos\sqrt{x},
		\end{equation}
		which is not nonnegative on \((0,\infty)\), hence not completely monotone.
	\end{example}
	
	\begin{example}[An admissible powered-beta case]\label{ex:one-minus-exp}
		For \((\alpha,\beta,\gamma,\kappa)=(1,2,1,1)\),
		\begin{equation}\label{eq:one-minus-exp}
			E_{1,2}^{1,1}(-x)=\frac{1-e^{-x}}{x}
			=\int_0^1e^{-xu}\dd u.
		\end{equation}
		This is \eqref{eq:beta-density} with the uniform measure on \((0,1)\).
	\end{example}
	
	\section{Conclusion}

We have obtained necessary and sufficient conditions for complete
monotonicity of the Srivastava--Tomovski series in its entire regime and for
its positive-axis realization. The analytic phase and the positivity boundary
are separate: the series phase is governed by \(\Delta=1+\alpha-\kappa\),
whereas positivity of the interior Wright kernel is governed by
\(a=\alpha/\kappa\) and \(\mu=\beta-a\gamma\). In the entire regime the
original series is completely monotone exactly when
\(\alpha\leq\kappa\) and \(\kappa\beta\geq\alpha\gamma\); the same criterion
holds for the explicitly defined Laplace--Wright realization across the
entire, finite-radius, and zero-radius phases.

For \(0<\alpha<\kappa\), the admissible Bernstein measure is the powered
pushforward of a power-biased Wright law. At \(\alpha=\kappa\), it becomes a
compact powered-beta law and, at \(\beta=\gamma\), the atom
\(\Gamma(\gamma)^{-1}\delta_1\). The excluded region \(\mu<0\) is ruled out
by the Mellin-zero obstruction and uniqueness of finite signed Laplace
transforms; the equal-step and supercritical regions are excluded by
moment-support asymptotics. Setting \(\kappa=1\) gives the Prabhakar
specialization.

Because the Srivastava--Tomovski extension was formulated as a
fractional-calculus kernel, the distinction between the defining series and a
separately defined positive-axis realization is not merely terminological.
Outside the entire regime, any use of the Laplace realization should identify
that realization explicitly. Multiplication by powers or nonlinear changes
of argument can also change complete-monotonicity and integrability
conditions and therefore require separate analysis.

	\appendix
	\crefalias{section}{appendix}
	
	\section{Mellin transform of the second-kind Wright function}\label{app:mellin}
	
	The proof of \Cref{lem:wright-mellin} starts from the classical Mellin--Barnes gamma quotient. Luchko's formula
	\cite[Eq.~(4), p.~245]{Luchko2019} contains
	\(\Gamma(s)/\Gamma(\beta-\rho s)\). The contour representation does not
	alone determine the sharp strip of the ordinary Mellin integral. The argument
	below establishes that integral and the absolute-convergence boundary required
	in \Cref{prop:negative-mu}. For the negative-ray asymptotics,
	Wright's theorem treats the exceptional gamma-pole cases
	\cite[Theorem~1, pp.~37--38, and Section~4, pp.~45--46]{Wright1940}; see also
	\cite[Theorem~6 and Eq.~(4.24), p.~393]{Paris2017}. With
	\[
	X=(1-a)(a^at)^{1/(1-a)},
	\]
	the large-\(t\) expansion has an algebraic factor times \(e^{-X}\).  Consequently, for fixed
	\(0<a<1\) and \(\nu\in\R\),
	\begin{equation}\label{eq:appendix-tail}
		\lvert W_{-a,\nu}(-t)\rvert\leq Ct^N
		\exp\{-bt^{1/(1-a)}\},\qquad t\geq1,
	\end{equation}
	for suitable \(C,N,b>0\).  The
	exponential rate is
	\begin{equation}\label{eq:exact-B}
		B=(1-a)a^{a/(1-a)},
	\end{equation}
	in the sense that \eqref{eq:appendix-tail} may be taken with every
	\(b<B\).  The algebraic prefactor depends on \(\nu\), but the rate does not.
	
	At the origin, the constant term is \(1/\Gamma(\nu)\).  If
	\(\nu\notin\mathbb Z_{\leq0}\), this gives the local Mellin boundary
	\(\Rea z>0\).  If \(\nu=-m\in\mathbb Z_{\leq0}\), the constant term
	vanishes and the coefficient of \(t\) is
	\(-1/\Gamma(-m-a)\neq0\), giving \(\Rea z>-1\).
	
	Choose \(N\in\mathbb N_0\) so that \(q=\nu+Na\geq0\).  Termwise
	differentiation of the entire Wright series gives
	\begin{equation}\label{eq:derivative-recurrence}
		\frac{\dd^N}{\dd t^N}W_{-a,q}(-t)
		=(-1)^N W_{-a,\nu}(-t).
	\end{equation}
	For \(q\geq0\), Ferreira and Simon's normalized Wright density has fractional
	moments \cite[Eqs.~(3)--(4), p.~310]{FerreiraSimon2024}
	\[
	\mathbb E[M_{a,q}^{s}]
	=\frac{\Gamma(1+s)\Gamma(a+q)}
	{\Gamma(a+q+as)},
	\qquad s>-1.
	\]
	Dividing by the normalizing factor \(\Gamma(a+q)\) and setting
	\(w=s+1\) yields
	\begin{equation}\label{eq:base-mellin}
		\int_0^\infty t^{w-1}W_{-a,q}(-t)\dd t
		=\frac{\Gamma(w)}{\Gamma(q+aw)},
		\qquad \Rea w>0.
	\end{equation}
	The integral defines a holomorphic function on \(\Rea w>0\): on compact
	subsets, its behavior at the origin together with \eqref{eq:appendix-tail}
	provides an integrable majorant. Since the identity holds for every real
	\(w>0\), the identity theorem extends it to the half-plane.
	
	For \(\Rea z>N\), insert \eqref{eq:derivative-recurrence} and integrate by
	parts \(N\) times.  At the \(j\)-th boundary step, the term is a constant
	multiple of
	\(t^{z-1-j}W_{-a,q-(N-1-j)a}(-t)\), with \(0\leq j<N\).
	It vanishes at infinity by \eqref{eq:appendix-tail}; at zero its power has
	real part at least \(\Rea z-N>0\), while the Wright factor is finite.
	Hence every boundary term vanishes, and
	\begin{align*}
		\int_0^\infty t^{z-1}W_{-a,\nu}(-t)\dd t
		&=\frac{\Gamma(z)}{\Gamma(z-N)}
		\int_0^\infty t^{z-N-1}W_{-a,q}(-t)\dd t\\
		&=\frac{\Gamma(z)}{\Gamma(\nu+az)}.
	\end{align*}
	The integral is holomorphic on \(\Rea z>0\) by its local behavior and
	stretched-exponential tail, so the identity theorem extends the formula to
	that half-plane.
	
	If \(\nu=-m\), the integral is holomorphic on \(\Rea z>-1\).  At zero,
	\[
	\frac1{\Gamma(-m+az)}\sim(-1)^m m!\,az,
	\qquad
	\Gamma(z)\sim z^{-1},
	\]
	so the quotient has the removable value \((-1)^m m!a\).  The identity
	theorem gives the asserted enlarged strip.  This proves
	\Cref{lem:wright-mellin}.
	
	Hence the fundamental half-plane is \(\Rea z>0\) unless
	\(\nu\in\mathbb Z_{\leq0}\), in which case the zero of the Wright function at
	the origin shifts the lower boundary to \(\Rea z>-1\) and cancels the pole at
	\(z=0\). The general Fox--\(H\) Mellin-transform formula and its
	fundamental-strip conditions are given in
	\cite[Theorem~2.2 and Eqs.~(2.5.5)--(2.5.7), pp.~43--44]{KilbasSaigo2004}.
	
	\section{Convergence and realization phases}\label{app:phases}
	
	Let
	\[
	c_n:=\frac{\Gamma(\gamma+\kappa n)}
	{\Gamma(\gamma)n!\Gamma(\beta+\alpha n)}.
	\]
	Gamma-ratio asymptotics give
	\begin{equation}\label{eq:coefficient-ratio}
		\frac{c_{n+1}}{c_n}
		=\frac{\kappa^\kappa}{\alpha^\alpha}n^{-\Delta}
		\left[1+
		\frac{\gamma-\beta-(1+\Delta)/2}{n}
		+O(n^{-2})\right].
	\end{equation}
	This proves \Cref{prop:convergence}.  In the critical case
	\(\Delta=0\), hence \(\kappa=\alpha+1\), and
	\begin{equation}\label{eq:critical-ratio}
		\frac{c_{n+1}}{c_n}
		=R^{-1}\left[1+
		\frac{\gamma-\beta-1/2}{n}+O(n^{-2})\right].
	\end{equation}
	Stirling's formula also gives the
	sharper boundary estimate
	\begin{equation}\label{eq:critical-coefficient}
		c_nR^n
		\sim
		\frac{\kappa^{\gamma-1/2}}
		{\sqrt{2\pi}\,\Gamma(\gamma)\alpha^{\beta-1/2}}
		n^{\gamma-\beta-1/2}.
	\end{equation}
	Consequently, the power series converges absolutely on \(\lvert z\rvert=R\) if
	\(\beta-\gamma>1/2\).  It converges conditionally at boundary points
	\(z\neq R\) when
	\(-1/2<\beta-\gamma\leq1/2\), and its terms fail to tend to zero when
	\(\beta-\gamma\leq-1/2\).  At the positive point \(z=R\), there is no
	conditional-convergence regime.
	Indeed, in the conditional range the exponent
	\(p=\gamma-\beta-1/2\) is negative, and
	\eqref{eq:critical-ratio} shows that \(c_nR^n\) is eventually decreasing to
	zero.  Dirichlet's test therefore applies at every boundary point
	\(z/R\neq1\); no monotonicity is being inferred from the asymptotic
	equivalence alone.
	
	For the Laplace realization, \eqref{eq:appendix-tail} and the substitution
	\(u=t^\kappa\) give
	\begin{equation}\label{eq:kernel-tail-app}
		\lvert K_{\alpha,\beta}^{\gamma,\kappa}(u)\rvert
		\leq C'u^{N'}
		\exp\{-b'u^{1/(\kappa-\alpha)}\},
		\qquad u\geq1.
	\end{equation}
	The exact rate before an arbitrarily small reduction is \(B\) from
	\eqref{eq:exact-B}.  If \(\Delta>0\), then
	\(1/(\kappa-\alpha)>1\), so
	\(\int e^{r u}\lvert K(u)\rvert\dd u<\infty\) for every \(r>0\).  In particular,
	\[
	\int_0^\infty e^{\lvert x\rvert u}\lvert K(u)\rvert\dd u<\infty,
	\qquad x\in\C.
	\]
	Expanding \(e^{-xu}\) under this absolute dominating function and using
	\eqref{eq:kernel-moments} proves entire equality with \eqref{eq:series}.
	
	If \(\Delta=0\), then \(\kappa-\alpha=1\), and
	\begin{equation}\label{eq:B-equals-R}
		B=(1-a)a^{a/(1-a)}
		=\frac{\alpha^\alpha}{\kappa^\kappa}=R.
	\end{equation}
	Thus the integral converges locally uniformly for \(\Rea x>-R\).  For
	\(\lvert x\rvert<R\), absolute domination by
	\(e^{\lvert x\rvert u}\lvert K(u)\rvert\) permits termwise
	expansion and proves agreement with the nonzero series germ.
	The leading coefficient in the Wright asymptotic cited in
	\Cref{app:mellin} is nonzero, so \(-R\) is the abscissa of absolute
	convergence; convergence at individual points of the boundary line can still
	depend on the algebraic prefactor.
	
	If \(\Delta<0\), then \(1/(\kappa-\alpha)<1\).  The integral remains
	holomorphic for \(\Rea x>0\), and \eqref{eq:kernel-tail-app} gives every
	absolute moment, so all right derivatives at zero exist.  Were the function
	analytic at zero, those derivatives would generate its Taylor series with a
	positive radius.  Formula \eqref{eq:coefficient-ratio} shows that this Taylor
	series has radius zero, a contradiction.  This proves \Cref{prop:phases}.
	
	\section{Signed Laplace uniqueness and moment support}\label{app:measure}
	
	\begin{lemma}[Finite signed Laplace uniqueness]\label{lem:signed-unique}
		Let \(\sigma\) be a finite signed Borel measure on \([0,\infty)\).  If
		\[
		\int_{[0,\infty)}e^{-xu}\sigma(\dd u)=0
		\qquad\text{for every }x>0,
		\]
		then \(\sigma=0\).
	\end{lemma}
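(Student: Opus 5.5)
The plan is to reduce the statement to the uniqueness of Fourier–Stieltjes transforms of finite measures, or equivalently to the Stone–Weierstrass density of exponentials. We write $\sigma=\sigma^+-\sigma^-$ by the Jordan decomposition. Both parts are finite positive measures on $[0,\infty)$. The hypothesis then reads
\[
\int e^{-xu}\sigma^+(\dd u)=\int e^{-xu}\sigma^-(\dd u)\qquad\text{for all } x>0.
\]
Both sides are finite and continuous in $x\ge0$ by dominated convergence with majorant $1$. Letting $x\downarrow0$ gives $\sigma^+([0,\infty))=\sigma^-([0,\infty))$, so the identity also holds at $x=0$.

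Next we compactify. The map $u\mapsto s=e^{-u}$ sends $[0,\infty)$ homeomorphically onto $(0,1]$. Let $\tau^\pm$ be the pushforwards of $\sigma^\pm$ under this map. They are finite positive Borel measures on $[0,1]$ that give no mass to $\{0\}$. Taking $x=n\in\mathbb N_0$ in the identity gives
\[
\int_{[0,1]} s^n\,\tau^+(\dd s)=\int_{[0,1]} s^n\,\tau^-(\dd s)\qquad\text{for every } n\ge0 .
\]
So all the moments agree. By the Weierstrass approximation theorem, polynomials are uniformly dense in $C[0,1]$, and therefore $\int g\,\dd\tau^+=\int g\,\dd\tau^-$ for every continuous $g$ on $[0,1]$. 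The Riesz representation theorem (uniqueness of regular Borel measures on a compact metric space) then gives $\tau^+=\tau^-$. Pulling back along the homeomorphism gives $\sigma^+=\sigma^-$, that is, $\sigma=0$.

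The only step needing care is the use of the endpoint values $x=0$ together with the integer values of $x$. The value $x=0$ is needed for the constant polynomial, and it comes from the limit argument above. Alternatively, the hypothesis at $x=n\ge1$ already gives equality of the moments $\int s^n\,\dd\tau^\pm$ for $n\ge1$. Then the measures $s\,\tau^\pm(\dd s)$ have equal moments of all orders $n\ge0$, hence coincide. Since $\tau^\pm(\{0\})=0$ and $s>0$ on the complementary set, dividing by $s$ gives $\tau^+=\tau^-$ on $(0,1]$, so the limit argument can be avoided entirely. I expect no real obstacle. The only point to verify is that the Jordan parts are finite, which holds because $\sigma$ is finite.
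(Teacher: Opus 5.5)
Your proof is correct and follows the paper's route. Both use the Jordan decomposition to reduce the claim to Laplace-transform uniqueness for the finite positive measures $\sigma^\pm$. The difference is that the paper cites that positive-measure uniqueness (Proposition~1.2 of Schilling--Song--Vondra\v{c}ek), whereas you prove it directly via $s=e^{-u}$, Weierstrass density and Riesz uniqueness on $[0,1]$, recovering the missing zeroth moment by $x\downarrow0$ or by passing to $s\,\tau^\pm(\mathrm{d}s)$. This makes your version self-contained.
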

	
	\begin{proof}
		Write the Jordan decomposition \(\sigma=\sigma^+-\sigma^-\).  The two finite
		positive measures have identical Laplace transforms.  Uniqueness for positive
		measures, for example \cite[Proposition~1.2, p.~2]{SchillingSongVondracek2012},
		gives \(\sigma^+=\sigma^-\).
	\end{proof}
	
	\begin{lemma}[Moment roots determine the upper support endpoint]
		\label{lem:moment-root}
		Let \(N\) be a finite nonzero positive measure on \([0,\infty)\) with all
		moments \(m_n=\int u^nN(\dd u)\) finite.  Then
		\begin{equation}\label{eq:moment-support}
			\lim_{n\to\infty}m_n^{1/n}=\esssup_N u
		\end{equation}
		in \([0,\infty]\), whenever the left side is interpreted in the extended
		sense.  In particular, a finite limit \(L\) forces support in \([0,L]\), and
		\(L=0\) forces support at zero.
	\end{lemma}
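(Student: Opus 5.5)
Let \(L:=\esssup_N u\in[0,\infty]\), the smallest \(L\) with \(N((L,\infty))=0\), and prove the two inequalities \(\limsup_n m_n^{1/n}\leq L\) and \(\liminf_n m_n^{1/n}\geq L\) separately. Together they give existence of the limit in \([0,\infty]\) and its identification with \(L\). The ``in particular'' clauses are then immediate: if the limit is a finite number, that number is \(L\), so \(N\) vanishes on \((L,\infty)\); and \(L=0\) means \(N\) is concentrated on \(\{0\}\).

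\emph{Upper bound.} If \(L=\infty\) there is nothing to prove. If \(L<\infty\), then \(u^n\leq L^n\) \(N\)-almost everywhere, so \(m_n\leq L^n m_0\). Since \(0<m_0<\infty\) (because \(N\) is finite and nonzero), \(m_0^{1/n}\to1\), and hence \(\limsup m_n^{1/n}\leq L\).

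\emph{Lower bound.} Fix any \(0<c<L\). By the definition of the essential supremum, \(\varepsilon_c:=N([c,\infty))>0\). Then
\[
m_n\geq\int_{[c,\infty)}u^n\,N(\dd u)\geq c^n\varepsilon_c,
\]
so \(\liminf m_n^{1/n}\geq c\). Letting \(c\uparrow L\) gives \(\liminf m_n^{1/n}\geq L\); when \(L=\infty\) this says the limit is \(+\infty\). The case \(L=0\) needs no lower bound, since \(m_n^{1/n}\geq0\) trivially.

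The only point needing care is that \(m_n^{1/n}\) makes sense for every \(n\geq1\). This holds because all moments are assumed finite, and \(m_n\geq0\). If \(N=m_0\delta_0\), then \(m_n=0\) for \(n\geq1\), which is consistent with \(L=0\). No other obstacle is expected; the argument is the standard \(L^n\)-norm-to-\(L^\infty\)-norm limit on a finite measure space.
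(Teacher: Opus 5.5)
Your proof is correct and matches the paper's argument: the lower bound \(m_n\geq c^nN([c,\infty))\) for \(c<L\), the upper bound \(m_n\leq L^n m_0\) when \(L<\infty\), and letting \(c\uparrow L\), including the case \(L=\infty\). You also make explicit two details the paper leaves implicit: \(m_0^{1/n}\to1\) because \(0<m_0<\infty\), and the two one-sided bounds together give existence of the limit.
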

	
	\begin{proof}
		If \(N((r,\infty))>0\), then
		\(m_n\geq r^nN((r,\infty))\), so
		\(\liminf m_n^{1/n}\geq r\).  Taking \(r\) upward to the essential support
		endpoint gives the lower bound.  If \(N\) is supported in \([0,L]\), then
		\(m_n\leq L^nN([0,\infty))\), giving the reverse bound.  The infinite-endpoint
		case follows by allowing arbitrary finite \(r\).
	\end{proof}
	
	Applying Stirling's formula directly to
	\eqref{eq:hypothetical-moments} gives
	\[
	m_n^{1/n}\sim
	e^{\alpha-\kappa}\frac{\kappa^\kappa}{\alpha^\alpha}
	n^{\kappa-\alpha}.
	\]
	This is \(1\) when \(\alpha=\kappa\) and tends to zero when
	\(\alpha>\kappa\), exactly as used in \Cref{prop:boundary-excluded,prop:supercritical}.
	
	\section{Endpoint calculations}\label{app:endpoints}
	
	For \(\alpha=\kappa\), \(\beta>\gamma\), the substitution
	\(u=t^\kappa\) transforms \eqref{eq:beta-density} into
	\[
	K_=(u)\dd u
	=\frac{t^{\gamma-1}(1-t)^{\beta-\gamma-1}}
	{\Gamma(\gamma)\Gamma(\beta-\gamma)}\dd t,
	\qquad 0<t<1.
	\]
	Its mass is \(1/\Gamma(\beta)\), and multiplication by
	\(\Gamma(\beta)\) gives the beta probability density.  At
	\(\beta=\gamma\), direct coefficient cancellation gives
	\[
	E_{\kappa,\gamma}^{\gamma,\kappa}(z)
	=\frac1{\Gamma(\gamma)}\sum_{n=0}^\infty\frac{z^n}{n!}
	=\frac{e^z}{\Gamma(\gamma)}.
	\]
	
	The three identities in \Cref{ex:linear-exp,ex:cosine,ex:one-minus-exp} follow
	from
	\begin{align*}
		\frac{\Gamma(n+2)}{n!\Gamma(n+1)}&=n+1,\\
		\frac{\Gamma(n+1)}{n!\Gamma(2n+1)}&=\frac1{(2n)!},\\
		\frac{\Gamma(n+1)}{n!\Gamma(n+2)}&=\frac1{(n+1)!},
	\end{align*}
	respectively.  In particular, the middle identity produces
	\(\cos\sqrt{x}\), because the series variable is \(-x\), not \(-x^2\).

\section*{Statements and Declarations}

\paragraph{Funding} No funding was received for conducting this study.

\paragraph{Competing interests} The authors have no competing interests to declare that are relevant to the content of this article.

\paragraph{Data availability} No datasets were generated or analyzed during the current study because the article is purely theoretical.

\end{document}